\documentclass{amsart}
\usepackage{amsmath}
\usepackage{amssymb}
\usepackage{amscd}
\usepackage[all]{xy}
\usepackage{color}
\usepackage{tikz-cd}
\usepackage{ulem}
\usepackage{tikz}
\usepackage{comment}
\usepackage{graphicx}

\mathchardef\mhyphen="2D

\newtheorem{theorem}{Theorem}[section]
\newtheorem{lemma}[theorem]{Lemma}
\newtheorem{corollary}[theorem]{Corollary}
\newtheorem{proposition}[theorem]{Proposition}

\newtheorem{example}[theorem]{Example}
\newtheorem{remark}[theorem]{Remark}


\begin{document}
\title[Local and $t$-local properties of integral domains]
{Local properties of integral domains under extensions and pullback constructions}

\author [H. Baek] {Hyungtae Baek}
\address{(Baek) School of Mathematics,
Kyungpook National University, Daegu 41566,
Republic of Korea}
\email{htbaek5@gmail.com}

\author [J. W. Lim] {Jung Wook Lim}
\address{(Lim) Department of Mathematics,
College of Natural Sciences,
Kyungpook National University, Daegu 41566,
Republic of Korea}
\email{jwlim@knu.ac.kr}

\author[O. Ouzzaouit]{Omar Ouzzaouit}
\address{(Ouzzaouit) Laboratory of Research in Sciences and Technique (LRST), The Higher School of Education and Training, Ibnou Zohr University, Agadir, Morocco}
\email{o.ouzzaouit@uiz.ac.ma}

\author[A. Tamoussit]{Ali Tamoussit}
\address{(Tamoussit) Department of Mathematics, The Regional Center for Education and Training Professions Souss Massa, Inezgane, Morocco}
\email{a.tamoussit@crmefsm.ac.ma; tamoussit2009@gmail.com}

\thanks{Words and phrases: Local property, ring extension, pullback construction, Krull-like domain}

\thanks{$2020$ Mathematics Subject Classification: 13A15, 13B30}

\date{\today}

\begin{abstract}
For a property $\mathcal{X}$ of integral domains,
an integral domain $D$ is said to be a {\it locally $\mathcal{X}$-domain}
if $D_P$ has the property $\mathcal{X}$ for every prime ideal $P$ of $D$.
In this paper, we study the transfer of local properties of integral domains
under several extensions and constructions, including flat overrings,
Nagata ideal transforms, polynomial rings and their quotient extensions, and pullback constructions.
\end{abstract}

\maketitle

\section{Introduction}

\subsection{Local properties}

The localization plays a fundamental role in commutative algebra.
In particular, it provides a useful technique
in studying some classes of commutative rings and modules.
In fact, the localization at a prime ideal allows one to restrict all studies to the local case.
Moreover, many classes of integral domains are defined or characterized
in terms of their localizations on distinguished types of ideals.
For example, Pr\"ufer domains, almost Dedekind domains \cite{Gi64}
and almost Krull domains \cite{P68} are defined by localization at maximal ideals,
$t$-almost Dedekind domains \cite{K89} (respectively, P$v$MDs) are defined
(respectively, characterized)
by localization at maximal $t$-ideals, and $P$-domains \cite{MZ81}
({\it i.e.}, locally essential domains) are defined by localization at associated prime ideals.
For a property $\mathcal{X}$ of integral domains,
we say that $D$ is a {\it locally $\mathcal{X}$-domain}
(respectively, {\it $t$-locally $\mathcal{X}$-domain})
if any localization of $D$ at a prime ideal (respectively, prime $t$-ideal)
has the property $\mathcal{X}$.
Under this naming system,
an almost Dedekind domain (respectively, almost Krull domain, P$v$MD, and $t$-almost Dedekind domain)
is exactly a locally DVR (respectively, locally Krull domain, $t$-locally valuation domain, and $t$-locally DVR).

\subsection{Star-operations}

To help readers better understand this paper,
we review some definitions and notation related to star-operations.
Let $D$ be an integral domain with quotient field $K$.
Let ${\bf F}(D)$ be the set of nonzero fractional ideals of $D$.
For an $I \in {\bf F}(D)$,
set $I^{-1} := \{a \in K \,|\, aI \subseteq D\}$.
The mapping on ${\bf F}(D)$ defined by $I \mapsto I_v := (I^{-1})^{-1}$
is called the {\it $v$-operation} on $D$;
the mapping on ${\bf F}(D)$ defined by
$I \mapsto I_t :=
\bigcup \{J_v \,|\, J \text{ is a nonzero finitely generated fractional subideal of $I$}\}$
is called the {\it $t$-operation} on $D$.
An ideal $J$ of $D$ is a {\it Glaz–Vasconcelos ideal}
(for short a {\it GV-ideal}),
and denoted by $J \in {\rm GV}(D)$ if
$J$ is finitely generated and $J_v = D$.
For each $I \in {\bf F}(D)$,
the {\it $w$-envelope} of $I$ is the set
$I_w := \{ x \in K \,|\, xJ \subseteq I \text{ for some } J \in {\rm GV}(D)\}$.
The mapping on ${\bf F}(D)$ defined by
$I \mapsto I_w$ is called a {\it $w$-operation} on $D$.
Let $*= v,t$ or $w$.
A nonzero fractional ideal $F$ of $D$ is a {\it fractional $*$-ideal} if $F_* = F$.
A nonzero ideal of $D$ is a {\it prime $*$-ideal} if it is both a $*$-ideal and a prime ideal.
A nonzero proper ideal of $D$ is called a {\it maximal $*$-ideal} if it is maximal
among the proper $*$-ideals of $D$, and denoted by $*\mhyphen{\rm Max}(D)$.
It is worth noting that any maximal $*$-ideal (if it exists) is a prime ideal.

\subsection{Krull-like domains and related integral domains}

Let $D$ be an integral domain, $X^1(D)$ the set of height-one prime ideals of $D$ and
$\mathfrak{A}$ a set of prime ideals of $D$.
Consider the following five conditions: 
\begin{enumerate}
\item[\rm(i)] $D = \bigcap_{P \in \mathfrak{A}} D_P$,
\item[\rm(ii)] $D_P$ is a Noetherian domain for any $P \in \mathfrak{A}$,
\item[\rm(iii)] $D_P$ is a valuation domain for any $P \in \mathfrak{A}$,
\item[\rm(iv)] $D_P$ is a DVR for any $P \in \mathfrak{A}$, and
\item[\rm(v)] every nonzero element of $D$ belongs to only finitely many elements of $\mathfrak{A}$.
\end{enumerate}
Based on these statements, we define the following:
\begin{itemize}
\item[(1)]
$D$ is a {\it Krull domain} if $D$ satisfies (i), (iv) and (v) when $\mathfrak{A} = X^1(D)$,
\item[(2)]
$D$ is an {\it infra-Krull domain} if $D$ satisfies (i), (ii) and (v) when $\mathfrak{A} = X^1(D)$,
\item[(3)]
$D$ is a {\it generalized Krull domain} (in the sense of Gilmer \cite{Gi72})
if $D$ satisfies (i), (iii) and (v) when $\mathfrak{A} = X^1(D)$,
\item[(4)]
$D$ is a {\it weakly Krull domain} if $D$ satisfies (i) and (v) when $\mathfrak{A} = X^1(D)$,
\item[(5)]
$D$ is an {\it essential domain} if there exists $\mathfrak{A}$ which makes $D$ satisfy (i) and (iii),
\item[(6)]
$D$ is a {\it Krull-type domain} if there exists $\mathfrak{A}$ which makes $D$ satisfy (i), (iii) and (v),
\item[(7)]
$D$ is a {\it Pr\"{u}fer $v$-multiplication domain} (for short, P$v$MD)
if $D$ satisfies (iii) when $\mathfrak{A} = t\mhyphen{\rm Max}(D)$, and
\item[(8)]
$D$ is a {\it strong Mori domain} (for short, SM domain) if
$D$ satisfies (ii) and (v) when $\mathfrak{A} = t\mhyphen{\rm Max}(D)$.
\end{itemize}

The following are well-known facts about Krull-like domains:
a weakly Krull domain is exactly a one $t$-dimensional domain with finite $t$-character \cite[Lemma 2.1(1)]{AMZ 1992};
an infra-Krull domain is exactly a one $t$-dimensional strong Mori domain \cite[Page 199]{Kim 2011}; and
a generalized Krull domain is precisely a one $t$-dimensional P$v$MD with finite $t$-character \cite[Theorems 3 and 5]{GT18}
(recall that $D$ is a {\it one $t$-dimensional domain} if $t\mhyphen{\rm Max}(D) = X^1(D)$, and
$D$ has {\it finite $t$-character} if every nonzero-nonunit element belongs to only finitely many maximal $t$-ideals of $D$).
Hence we can easily obtain the following implications.

\begin{center}
$
\begin{tikzcd}[arrows=Rightarrow, row sep=1.2em, column sep=2em]
&{\begin{array}{c}\text{Krull domain}\arrow[dr, shorten < = 10pt, shorten > = -3pt]\arrow[dl, shorten < = 10pt, shorten > = -3pt]\end{array}}&&\\
{\begin{array}{c}\text{Infra}\\\text{Krull domain}\end{array}}\arrow[d]\arrow[dr]
&
&{\begin{array}{c}\text{Generalized}\\\text{Krull domain}\end{array}}\arrow[r]\arrow[dl]&{\begin{array}{c}\text{Krull-type}\\ \text{domain}\end{array}}\arrow[d, shorten >= 5pt]\\
{\begin{array}{c}\text{Strong} \\ \text{Mori domain}\end{array}} &{\begin{array}{c}\text{Weakly}\\\text{Krull domain}\end{array}}&{\begin{array}{c}\text{Essential}\\ \text{domain}\end{array}}&\text{P$v$MD}\arrow[l, shorten >= 9pt, shorten <= 13pt]
\end{tikzcd}
$
\end{center}

\subsection{Main results}

In this paper, we study the transfer of local properties of integral domains
to several types of extensions, such as flat overrings,
Nagata ideal transforms, and polynomial ring extensions,
as well as to pullback constructions.
More precisely, for a given property $\mathcal{X}$ of integral domains,
we show that the property of being a locally $\mathcal{X}$-domain
(respectively, $t$-locally $\mathcal{X}$-domain) is preserved under flat overring
(respectively, $t$-flat overring) extensions (Proposition \ref{cffalt}).

Moreover, for non-quasi-local domains, we characterize locally $\mathcal{X}$-domains
in terms of their Nagata ideal transforms (Proposition \ref{NT}).
We further show that, under suitable assumptions on $\mathcal{X}$,
a polynomial ring, its base ring, and certain quotient extensions
share the same locally $\mathcal{X}$ property
(Theorems~\ref{local property QE} and~\ref{t-local property QE}).
Finally, we investigate necessary and sufficient conditions
under which a pullback construction yields a locally Krull-like domain
(Theorems~\ref{locally Krull-like domain} and~\ref{GK domain}).

\section{Some extension of integral domains and their local properties}\label{section 2}

In this section, we investigate the transfer of some local properties
in various extensions of integral domains.

Let $D$ be an integral domain.
Given a property $\mathcal{X}$ of integral domains,
$D$ is said to be a {\it locally $\mathcal{X}$-domain}
(respectively, {\it $t$-locally $\mathcal{X}$-domain})
if $D_P$ has the property $\mathcal{X}$ for any $P \in {\rm Spec}(D)$
(respectively, $P \in t \mhyphen {\rm Spec}(D)$).
Clearly, every locally $\mathcal{X}$-domain is a $t$-locally $\mathcal{X}$-domain,
and if $\mathcal{Y}$ is a property of integral domains such that
$\mathcal{X}$ is a subclass of $\mathcal{Y}$,
then locally $\mathcal{X}$-domains are locally $\mathcal{Y}$-domains.
Also, if $\mathcal{X}$ is stable under the quotient extension,
then the class of $\mathcal{X}$-domains is a subclass of locally $\mathcal{X}$-domains.

The following result establishes the transfer of the ($t$-)locally $\mathcal{X}$ property
to ($t$-)flat overrings.
Moreover, we show that the definition of the locally $\mathcal{X}$-domains
(respectively, the $t$-locally $\mathcal{X}$-domains)
may be reduced to localizations at a maximal ideal
(respectively, maximal $t$-ideal)
when the property $\mathcal{X}$ is stable under the quotient extension.

\begin{proposition}\label{ffalt}
Let $D$ be an integral domain and
let $\mathcal{X}$ be a property of integral domains.
Then the following assertions hold.
\begin{enumerate}
\item[(1)]
$D$ is a locally $\mathcal{X}$-domain if and only if
every flat overring of $D$ is also a locally $\mathcal{X}$-domain.
\item[(2)]
$D$ is a $t$-locally $\mathcal{X}$-domain if and only if
every $t$-flat overring of $D$ is also a $t$-locally $\mathcal{X}$-domain.
\item[(3)]
If $\mathcal{X}$ is stable under the quotient extension, then
\begin{enumerate}
\item[(a)]
$D$ is a locally $\mathcal{X}$-domain if and only if
$D_M$ has the property $\mathcal{X}$ for all $M \in {\rm Max}(D)$.
\item[(b)]
$D$ is a $t$-locally $\mathcal{X}$-domain if and only if
$D_M$ has the property $\mathcal{X}$ for all $M \in t\text{-}{\rm Max}(D)$.
\end{enumerate} 
\end{enumerate}
\end{proposition}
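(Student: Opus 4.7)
The plan is to dispatch the three assertions in order, relying in each case on the well-known identification $T_Q = D_{Q \cap D}$ for appropriate primes $Q$ of a flat (or $t$-flat) overring $T$, together with the stability hypothesis in part $(3)$.

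For $(1)$, the ``if'' direction is immediate since $D$ is itself a flat overring of $D$. For the ``only if'' direction, I would fix a flat overring $T$ of $D$ and a prime $Q \in \mathrm{Spec}(T)$; by the classical characterization of flat overrings (Richman), $T_Q = D_{Q \cap D}$, and since $D$ is locally $\mathcal{X}$ and $Q \cap D \in \mathrm{Spec}(D)$, the ring $T_Q$ inherits property $\mathcal{X}$. Part $(2)$ proceeds analogously: for ``only if'', let $T$ be a $t$-flat overring and $Q$ a prime $t$-ideal of $T$; choosing a maximal $t$-ideal $M$ of $T$ containing $Q$, $t$-flatness yields $T_M = D_{M \cap D}$, and the standard prime correspondence then gives $T_Q = (T_M)_{QT_M} = D_{Q \cap D}$, where $Q \cap D$ is a prime $t$-ideal of $D$. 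Since $D$ is $t$-locally $\mathcal{X}$, it follows that $T_Q$ has $\mathcal{X}$.

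For $(3)$, the ``only if'' direction of each part is trivial. For $(a)$, given $P \in \mathrm{Spec}(D)$, I would pick a maximal ideal $M \supseteq P$; then $D \setminus M \subseteq D \setminus P$, so $D_P$ is a localization (quotient extension) of $D_M$. Since $D_M$ has $\mathcal{X}$ and $\mathcal{X}$ is stable under quotient extension, $D_P$ has $\mathcal{X}$. Part $(b)$ runs identically, using that every prime $t$-ideal is contained in some maximal $t$-ideal (a routine Zorn's lemma argument among integral $t$-ideals).

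The main obstacle is the bookkeeping in part $(2)$: namely, justifying that $Q \cap D$ remains a prime $t$-ideal of $D$ whenever $Q$ is a prime $t$-ideal of a $t$-flat overring $T$, and propagating the identification $T_Q = D_{Q \cap D}$ from the maximal-$t$-ideal formulation of $t$-flatness down to arbitrary prime $t$-ideals. The remaining steps are essentially immediate from Richman's theorem on flat overrings and from the stability assumption on $\mathcal{X}$.
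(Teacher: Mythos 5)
Your proposal is correct and follows essentially the same route as the paper: the identification $T_Q = D_{Q\cap D}$ for (flat, resp.\ $t$-flat) overrings together with the fact that contractions of prime $t$-ideals of $t$-flat overrings are prime $t$-ideals of $D$, and the factorization $D_P = (D_M)_{PD_M}$ for part (3). The only cosmetic difference is that you derive the $t$-flat identification by first localizing at a maximal $t$-ideal containing $Q$, whereas the paper simply invokes the contraction fact directly; both hinge on the same cited property of $t$-flat overrings.
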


\begin{proof}
(1) Suppose that $D$ is a locally $\mathcal{X}$-domain.
Let $T$ be a flat overring of $D$ and
let $\mathfrak{p}$ be a prime ideal of $T$.
Then $T_{\mathfrak{p}} = D_{\mathfrak{p} \cap D}$ has the property $\mathcal{X}$.
Thus $T$ is locally $\mathcal{X}$.
The converse is obvious.

(2) The proof is similar to the proof of
the assertion (1) with the fact that
every contraction of a prime $t$-ideal of a $t$-flat overring of $D$
is a prime $t$-ideal of $D$.

(3) For the assertion (a),
suppose that $D_M$ has the property $\mathcal{X}$
for any $M \in {\rm Max}(D)$.
Let $P$ be a prime ideal of $D$.
Then there exists a maximal ideal $M$ of $D$ containing $P$.
As $D_P = (D_M)_{PD_M}$ and
the property $\mathcal{X}$ is stable under the quotient extension,
$D_P$ has the property $\mathcal{X}$.
This implies that $D$ is a locally $\mathcal{X}$-domain.
The converse is obvious.
Also, the proof of the assertion (b) is similar to that of (a)
by using the fact that every prime $t$-ideal of $D$ is contained in a maximal $t$-ideal.
\end{proof}

From the above proposition, we deduce the following two corollaries.

\begin{corollary}\label{cffalt}
Let $D$ be an integral domain.
If $\mathcal{X}$ is a property of integral domains,
then the following assertions hold.
\begin{itemize}
\item[(1)]
$D$ is a locally $\mathcal{X}$-domain if and only if
$D_S$ is a locally $\mathcal{X}$-domain
for any multiplicative subset $S$ of $D$.
\item[(2)]
If $D$ is a $t$-locally $\mathcal{X}$-domain,
then so is $D_S$ for any multiplicative subset $S$ of $D$.
\end{itemize}
\end{corollary}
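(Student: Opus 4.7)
The plan is to deduce both assertions directly from Proposition~\ref{ffalt}, using the standard observation that for any multiplicative subset $S$ of $D$, the localization $D_S$ is a flat overring of $D$. Since flatness of an overring implies $t$-flatness, $D_S$ is simultaneously a flat overring and a $t$-flat overring, which places it within the hypotheses of both parts of the previous proposition.

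For assertion (1), the forward direction is immediate: applying Proposition~\ref{ffalt}(1) to $T = D_S$ shows that $D_S$ is locally $\mathcal{X}$ whenever $D$ is. For the converse, one takes $S = \{1\}$ (or any set of units of $D$), so that $D_S = D$, and hence the assumption that $D_S$ is locally $\mathcal{X}$ for every multiplicative subset $S$ specializes to the statement that $D$ itself is locally $\mathcal{X}$.

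For assertion (2), we simply apply Proposition~\ref{ffalt}(2) to the $t$-flat overring $T = D_S$ to conclude that $D_S$ is a $t$-locally $\mathcal{X}$-domain whenever $D$ is.

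There is essentially no obstacle in this proof: the corollary is a direct specialization of Proposition~\ref{ffalt} to the subclass of flat overrings arising as localizations of $D$ at multiplicative subsets. The only technical point worth recording is the classical fact that every localization $D_S$ is a flat (and therefore $t$-flat) overring of $D$, which is what allows the hypotheses of the previous proposition to be invoked.
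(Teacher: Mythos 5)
Your proof is correct and takes essentially the same approach as the paper: both deduce the corollary from Proposition~\ref{ffalt} using the fact that $D_S$ is a flat (hence $t$-flat) overring of $D$. The only cosmetic difference is in the converse of assertion (1), where you specialize to $S=\{1\}$ so that $D_S=D$, while the paper instead takes $S=D\setminus P$ for a given prime $P$ and writes $D_P=(D_S)_{PD_S}$; both arguments are valid.
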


\begin{proof}
Suppose that $D_S$ is a locally $\mathcal{X}$-domain
for any multiplicative subset $S$ of $D$.
Let $P$ be a prime ideal of $D$ and
set $S = D \setminus P$.
Then $D_P = (D_S)_{PD_S}$ has the property $\mathcal{X}$,
which means that $D$ is a locally $\mathcal{X}$-domain.
The converse and the assertion (2) are obvious since every quotient ring is a ($t$-)flat overring.
\end{proof}

Recall from \cite{P76} that a proper overring $T$ of
an integral domain $D$ is said to be \textit{minimal}
if there is no proper overring of $D$ which is properly contained in $T$.

\begin{corollary}
Let $D$ be an integral domain which has a minimal overring $T$ and
let $\mathcal{X}$ be a property of integral domains.
Suppose that $D$ is a locally $\mathcal{X}$-domain.
Then the following assertions hold.
\begin{enumerate}
\item[(1)]
If $D$ is not a quasi-local domain and
$T$ is a quasi-local domain, then $T$ has the property $\mathcal{X}$.
\item[(2)]
If $D$ is an integrally closed quasi-local domain,
then $T$ has the property $\mathcal{X}$.
\end{enumerate}
\end{corollary}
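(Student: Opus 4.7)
\emph{Proof plan.} My approach is to reduce both parts to showing that the minimal overring $T$ coincides with a localization $D_P$ at some prime ideal $P$ of $D$. Once this identification is established, the hypothesis that $D$ is locally $\mathcal{X}$ immediately gives that $T = D_P$ has property $\mathcal{X}$.

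For (1), I would let $\mathfrak{m}$ denote the unique maximal ideal of $T$ and set $P := \mathfrak{m} \cap D$. Any $s \in D \setminus P$ lies outside $\mathfrak{m}$, hence is a unit of $T$, so $D_P \subseteq T$. The next, and only delicate, step is to rule out the equality $D = D_P$: if it held, every non-unit of $D$ would lie in $P$, forcing every maximal ideal of $D$ to be contained in (and thus to coincide with) $P$; this contradicts the hypothesis that $D$ is not quasi-local. Hence $D \subsetneq D_P \subseteq T$, and the minimality of $T$ as an overring of $D$ forces $T = D_P$, completing (1).

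For (2), the elementary argument of (1) is not immediately available, since $T$ need not be quasi-local and, moreover, the contraction $\mathfrak{m} \cap D$ of any maximal ideal $\mathfrak{m}$ of $T$ is forced to be the unique maximal ideal of $D$. Instead, I plan to appeal to the structure theory of minimal overrings developed in \cite{P76}: a minimal extension is either of integral type or of flat (localization) type, and the integrally closed hypothesis on $D$ rules out any proper integral minimal extension of $D$. This leaves only the flat case, which in the quasi-local setting identifies $T$ with $D_P$ for some (necessarily non-maximal) prime $P$ of $D$, and the conclusion follows as before.

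The main obstacle is (2): unlike (1), it rests on the non-trivial classification of minimal ring extensions, first to rule out the integral types via the integrally closed hypothesis, and then to explicitly identify the remaining flat minimal extension of a quasi-local domain as a localization at a prime. If the identification via \cite{P76} turns out to be indirect, a backup plan is to verify flatness of $T$ by hand using the integrally closed hypothesis and then to invoke Proposition \ref{ffalt}(1), combined with an analysis of the maximal spectrum of $T$, to conclude $T = D_P$.
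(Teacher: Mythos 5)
Your proposal is correct in substance, but the two parts fare differently when compared with the paper. For (1) you give a genuinely more elementary, self-contained argument: setting $P=\mathfrak{m}\cap D$ for the maximal ideal $\mathfrak{m}$ of the quasi-local $T$, observing $D\subseteq D_P\subseteq T$, ruling out $D=D_P$ via the non-quasi-local hypothesis, and squeezing $T=D_P$ out of minimality. This is complete and in effect re-proves the relevant case of \cite[Lemma 3.1]{P76}, which the paper simply cites; what your route buys is independence from that reference, at the cost of a few lines. The paper's proof is shorter and uniform: both cases are reduced to the single statement that $T$ is a \emph{quasi-local flat} overring (by \cite[Lemmas 3.1 and 2.7]{P76}), whence Proposition \ref{ffalt}(1) makes $T$ locally $\mathcal{X}$ and quasi-locality gives $T=T_{\mathfrak{m}}=D_{\mathfrak{m}\cap D}$, which has $\mathcal{X}$.

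For (2) your plan is essentially the paper's (both ultimately rest on \cite[Lemma 2.7]{P76}), but one intermediate assertion in your reconstruction deserves a caution. The Ferrand--Olivier-type dichotomy plus integral closedness does give that $T$ is flat over $D$, but ``flat minimal overring of a quasi-local domain'' does not formally identify $T$ with a localization $D_P$: a flat overring of a quasi-local domain is in general only an intersection $\bigcap_{\mathfrak{m}\in\operatorname{Max}(T)}D_{\mathfrak{m}\cap D}$, and knowing flatness alone would only yield, via Proposition \ref{ffalt}(1), that $T$ is \emph{locally} $\mathcal{X}$ --- not that $T$ itself has $\mathcal{X}$. The missing ingredient is precisely that $T$ is quasi-local (equivalently $T=D_P$ for a single prime $P$), and that is exactly the content of \cite[Lemma 2.7]{P76}; so you should cite that lemma for the full conclusion rather than only for flatness, or supply a separate argument that a minimal flat overring here has a unique maximal ideal. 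With that citation made explicit, your proof of (2) coincides with the paper's.
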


\begin{proof}
Note that every quasi-local minimal overring of non-quasi-local domains and
minimal overrings of integrally closed quasi-local domains are
quasi-local flat overrings
\cite[Lemma 3.1]{P76} and \cite[Lemma 2.7]{P76}, respectively.
By Proposition \ref{ffalt},
the assertions hold.
\end{proof}

Recall that an integral domain $D$ is said to be a {\it semi-quasi-local ring}
if $D$ has only finitely many maximal ideals.

\begin{corollary}\label{cffalt2}
Let $D$ be an integral domain with quotient field $K$,
$\{D_i \,|\, 1 \leq i \leq n\}$ a finite set of semi-quasi-local flat overrings of $D$ and
$T = \bigcap_{i=1}^{n} D_i$.
Suppose that $\mathcal{X}$ is a property of integral domains
which is stable under the quotient extension.
If $D$ is a locally $\mathcal{X}$-domain, then so is $T$.
\end{corollary}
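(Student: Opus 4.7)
The plan is to exhibit $T$ as a finite intersection of localizations of $D$ and then show that, for every prime ideal $Q$ of $T$, $T_Q$ is itself a localization of $D_P$ for a suitable prime $P$ of $D$; the stability hypothesis on $\mathcal{X}$ then does the rest.

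First, I would use the standard identity $R_\mathfrak{q} = D_{\mathfrak{q} \cap D}$ valid for any flat overring $R$ of $D$ and any prime $\mathfrak{q}$ of $R$ --- this is the same identity underlying the proof of Proposition~\ref{ffalt}(1). Applying it to each semi-quasi-local flat overring $D_i$ yields
\[
D_i \;=\; \bigcap_{M \in \mathrm{Max}(D_i)} (D_i)_M \;=\; \bigcap_{M \in \mathrm{Max}(D_i)} D_{M \cap D},
\]
so that $T = \bigcap_{P \in \Lambda} D_P$, where
$\Lambda := \{\, M \cap D : M \in \mathrm{Max}(D_i),\; 1 \le i \le n\,\}$
is a finite set of prime ideals of $D$.

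Next, I would fix an arbitrary prime ideal $Q$ of $T$ and apply prime avoidance to $Q \cap D$ against the finite family $\Lambda$ to produce some $P \in \Lambda$ with $Q \cap D \subseteq P$. The contrapositive argument is immediate: if $Q \cap D \not\subseteq \bigcup_{P \in \Lambda} P$, one could pick $x \in Q \cap D$ outside every $P \in \Lambda$; then $1/x$ would lie in every $D_P$ and hence in $T$, forcing $x \in T^{\times}$ and contradicting $x \in Q$. With such a $P$ fixed, the inclusion $D \setminus P \subseteq T \setminus Q$ and the containment $T \subseteq D_P$ (because $P \in \Lambda$) together show that $(D \setminus P)^{-1} T = D_P$. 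Consequently, $T_Q$ is obtained from $D_P$ by further inverting the image of $T \setminus Q$, so $T_Q$ is a localization of $D_P$.

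Since $D$ is locally $\mathcal{X}$, $D_P$ has property $\mathcal{X}$, and by the stability of $\mathcal{X}$ under quotient extension, so does $T_Q$; this yields the conclusion. I expect the main obstacle to be the prime-avoidance reduction combined with the careful identification of $T_Q$ as a localization of $D_P$; once these are in place, the rest is a direct invocation of the hypotheses and Proposition~\ref{ffalt}.
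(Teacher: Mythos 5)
Your proof is correct and follows essentially the same route as the paper: both arguments reduce to the finite family of contracted maximal ideals of the $D_i$, invoke prime avoidance, use the flat-overring identity $(D_i)_M = D_{M\cap D}$ to recognize the relevant localization of $T$ as (a quotient ring of) a localization of $D$, and finish with the stability of $\mathcal{X}$ under quotient extension. The only cosmetic difference is that you work with arbitrary primes $Q$ of $T$ and identify $T_Q$ as a localization of some $D_P$, whereas the paper restricts to maximal ideals $\mathfrak{m}$ of $T$, proves the exact equality $T_{\mathfrak{m}}=(D_i)_{M_i}$, and then appeals to Proposition~\ref{ffalt}(3).
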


\begin{proof}
Note that every nonunit of $T$ is also a nonunit in $D_i$
for some $1 \leq i \leq n$, so
the set of nonunits of $T$ is exactly the union of
the finite set of contracted maximal ideals of $D_i$.
Since for each $1 \leq i \leq n$,
the number of maximal ideals of $D_i$ is finite,
every maximal ideal of $T$ is the contraction of a maximal ideal of some $D_i$
by the prime avoidance lemma.
Now, assume that $D$ is a locally $\mathcal{X}$-domain and
let $\mathfrak{m}$ be a maximal ideal of $T$.
Then there exists $1 \leq i \leq n$ such that
$\mathfrak{m} = M_i \cap T$, where $M_i \in {\rm Max}(D_i)$.
Since $D_i$ is a flat overring of $D$, it follows that
$T_{\mathfrak{m}} = T_{M_i \cap T} = (D_i)_{M_i}$ has the property $\mathcal{X}$,
as $D_i$ is a locally $\mathcal{X}$-domain by Proposition \ref{ffalt}(1).
Hence we show that $T_{\mathfrak{m}}$ has the property $\mathcal{X}$
for any $\mathfrak{m} \in {\rm Max}(T)$.
Thus $T$ is a locally $\mathcal{X}$-domain by Proposition \ref{ffalt}(3).
\end{proof}

We next prove that the property of being a locally $\mathcal{X}$-domain is
preserved under Nagata ideal transforms
when the property $\mathcal{X}$ is stable under the quotient extension and
the considered domain is not a quasi-local domain.
First, for an ideal $I$ of an integral domain $D$ (with quotient field $K$),
we recall that the \textit{Nagata (ideal) transform of} $I$ is
the overring of $D$ defined as follows:
\begin{center}
$T(I)=\bigcup_{n=0}^\infty(D:I^n)=\bigcup_{n=0}^\infty\{a\in K \,|\, aI^n\subseteq D\}$.
\end{center}
For the sake of simplicity,
if $I = (a)$, then we denote $T((a))$ by $T(a)$.
It is worth noting that for each nonunit element $a$ of $D$,
we have: $T(a)=D_S$, where $S=\{a^n \,|\, n \geq 1\}$ (cf. \cite[Lemma 2.2]{Br68}).
We also note that $D$ is either a Pr\"ufer domain or an almost Dedekind domain if and only if
so is $T(a)$ for each nonunit element $a$ of $D$ (cf. \cite[Propositions 2.4 and 2.5]{Br68}).
Thus in the following, we show that the same result runs with any property of
integral domains that is stable under the quotient extension.

\begin{proposition}\label{NT}
Let $D$ be an integral domain which is not a quasi-local domain and
let $\mathcal{X}$ be a property of integral domains.
Then $D$ is a locally $\mathcal{X}$-domain if and only if
$T(a)$ is a locally $\mathcal{X}$-domain for any nonunit element $a \in D$.
\end{proposition}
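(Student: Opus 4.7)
The plan is to exploit the identity $T(a) = D_S$ for $S = \{a^n : n \geq 1\}$, recorded just before the proposition, so that every $T(a)$ is a (flat) localization of $D$, and then to apply Corollary~\ref{cffalt}(1).

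For the forward direction, if $D$ is locally $\mathcal{X}$ then $D_S$ is locally $\mathcal{X}$ for every multiplicative subset $S$ by Corollary~\ref{cffalt}(1); in particular every $T(a)$ is locally $\mathcal{X}$. This implication is immediate and does not require the non-quasi-local hypothesis.

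For the converse, fix an arbitrary prime $P$ of $D$ and aim to produce a nonunit $a \in D \setminus P$. Choose a maximal ideal $M$ of $D$ with $P \subseteq M$; since $D$ is \emph{not} quasi-local, there is a second maximal ideal $M' \neq M$, and by maximality $M' \not\subseteq M$, so any element $a \in M' \setminus M$ is a nonunit of $D$ lying outside $P$. For this $a$ the multiplicative set $S = \{a^n : n \geq 1\}$ misses $P$, whence $PT(a) = PD_S$ is a prime ideal of $T(a)$ satisfying $(T(a))_{PT(a)} = D_P$. The assumption that $T(a)$ is locally $\mathcal{X}$ then forces $D_P$ to have property $\mathcal{X}$, and since $P$ was arbitrary we conclude that $D$ is locally $\mathcal{X}$.

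The main (and essentially only) obstacle is the construction of the nonunit $a \in D \setminus P$: this is exactly where the non-quasi-local hypothesis is used, and it cannot be dropped, since in a quasi-local domain with maximal ideal $M$ one cannot separate $M$ from itself by a nonunit, so $T(a)$ would never recover the localization $D_M$. Everything else is routine bookkeeping with localizations.
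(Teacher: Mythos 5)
Your proposal is correct and follows essentially the same route as the paper: both directions use the identity $T(a)=D_S$ with $S=\{a^n : n\ge 1\}$ together with Corollary~\ref{cffalt}(1), and the converse hinges on producing a nonunit outside a given prime $P$, which exists precisely because $D$ is not quasi-local. Your explicit construction of that nonunit via a second maximal ideal $M'\neq M$ just spells out what the paper asserts more tersely.
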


\begin{proof}
Suppose that $D$ is a locally $\mathcal{X}$-domain.
Let $a$ be a nonunit element of $D$.
Then $T(a) = D_S$,
where $S = \{a^n \,|\, n \in \mathbb{N}\}$.
Hence by Corollary \ref{cffalt},
$T(a)$ is a locally $\mathcal{X}$-domain.
For the converse,
suppose that $T(a)$ is a locally $\mathcal{X}$-domain
for each nonunit element $a$ of $D$.
Let $P$ be a prime ideal of $D$.
As $D$ is not a quasi-local domain,
there is a nonunit element in $D$ which is not contained in $P$.
Set $S = \{a^n \,|\, n \in \mathbb{N}\}$.
Then $T(a) = D_S$.
Since $P$ is disjoint from $S$, it follows that
$PD_S$ is a prime ideal of $D_S$.
Hence $D_P = (D_S)_{PD_S}$ has the property $\mathcal{X}$,
which means that $D$ is a locally $\mathcal{X}$-domain.
\end{proof}

Let $D$ be an integral domain and
let $I$ be a nonzero finitely generated ideal of $D$.
Then $T(I)=\bigcap_{i=1}^nT(a_i)$,
where $a_i$'s are generators of $I$
\cite[Proposition 1.4]{Br68}.
Hence the next result follows directly from Corollary \ref{cffalt2} and Proposition \ref{NT}.

\begin{corollary}
Let $D$ be a semi-quasi-local domain which is not quasi-local,
$I$ a nonzero finitely generated ideal of $D$ and
$\mathcal{X}$ a property of integral domains.
If $D$ is a locally $\mathcal{X}$-domain, then so is $T(I)$.
\end{corollary}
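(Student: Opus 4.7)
The plan is to combine Brewer's intersection formula $T(I) = \bigcap_{i=1}^n T(a_i)$ for $I = (a_1, \ldots, a_n)$ (noted in the paragraph preceding the corollary) with the stability of the locally $\mathcal{X}$ property under finite intersections of semi-quasi-local flat overrings, as established in Corollary \ref{cffalt2}. In effect, Brewer's formula reduces the general case to the principal case handled by Proposition \ref{NT}, and the finite intersection is then absorbed by Corollary \ref{cffalt2}.

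Concretely, I would proceed as follows. First, fix a finite generating set $a_1, \ldots, a_n$ of $I$; we may assume $I \neq D$ (otherwise $T(I) = D$ is trivially locally $\mathcal{X}$), so every $a_i$ is a nonunit of $D$. Second, by Brewer's formula, $T(I) = \bigcap_{i=1}^n T(a_i)$. Third, for each $i$ I would identify $T(a_i) = D_{S_i}$ with $S_i = \{a_i^k : k \geq 0\}$; this shows that $T(a_i)$ is a flat overring of $D$, and since $D$ is semi-quasi-local its localization $D_{S_i}$ is also semi-quasi-local (its maximal ideals correspond to a subset of the finitely many maximal ideals of $D$). Fourth, by Proposition \ref{NT}, whose hypothesis that $D$ is not quasi-local is part of our assumption, each $T(a_i)$ is itself a locally $\mathcal{X}$-domain. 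Finally, applying Corollary \ref{cffalt2} to the finite family $\{T(a_i)\}_{i=1}^n$ of semi-quasi-local flat overrings of $D$ concludes that $T(I) = \bigcap_{i=1}^n T(a_i)$ is locally $\mathcal{X}$.

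There is no genuine obstacle here; the proof is a straightforward concatenation of Brewer's identity, Proposition \ref{NT}, and Corollary \ref{cffalt2}. The one verification worth recording explicitly is the semi-quasi-locality of each $T(a_i)$, which is immediate from the prime-to-$S_i$ correspondence for localizations. I also note that invoking Corollary \ref{cffalt2} tacitly requires $\mathcal{X}$ to be stable under quotient extension; this assumption, although not stated in the corollary, appears to be implicitly assumed in line with the hypothesis of the corollary being cited.
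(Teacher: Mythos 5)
Your route is exactly the paper's: the paper derives this corollary in one line from Brewer's identity $T(I)=\bigcap_{i=1}^{n}T(a_i)$ together with Proposition \ref{NT} and Corollary \ref{cffalt2}, and your write-up fills in the same steps in the same order. Your closing caveat is also on target: Corollary \ref{cffalt2} carries the hypothesis that $\mathcal{X}$ is stable under the quotient extension, which is absent from the statement here, so that hypothesis is indeed being used tacitly (by the paper as much as by you).

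However, the one step you singled out as ``worth recording explicitly'' is justified incorrectly, and it is the step on which the application of Corollary \ref{cffalt2} hinges. The maximal ideals of $T(a_i)=D_{S_i}$ do \emph{not} correspond to a subset of the maximal ideals of $D$; they correspond to the primes of $D$ that are maximal among those disjoint from $S_i$, i.e.\ maximal among primes not containing $a_i$, and these need be neither maximal in $D$ nor finite in number. For instance, let $D$ be the localization of $k[x,y]$ at the complement of $(x,y)\cup(x,y-1)$; then $D$ is semi-quasi-local but not quasi-local, $x$ is a nonunit, and $T(x)=D[1/x]$ has the infinitely many maximal ideals $(y-x^n)D[1/x]$ for $n\ge 1$ (each $(y-x^n)D$ is maximal among primes of $D$ avoiding $x$, since the only prime properly above it is $(x,y)D$). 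So semi-quasi-locality of $D$ does not pass to the $T(a_i)$, and without it the prime-avoidance argument inside Corollary \ref{cffalt2} is unavailable. To be fair, the paper's one-line derivation is silent on this point and inherits the same gap; but the ``prime-to-$S_i$ correspondence'' you invoked is not the correspondence that governs maximal ideals of a localization, so as written this step is not merely unverified but false in general, and the proof needs a different argument (or an additional hypothesis) at precisely this place.
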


For a given property $\mathcal{X}$ of integral domains,
an integral domain $D$ is said to be a \textit{maximal locally $\mathcal{X}$-domain} if
$D$ is not locally $\mathcal{X}$ but each proper overring of $D$ is locally $\mathcal{X}$.
We first give a necessary condition for a domain to be maximal locally $\mathcal{X}$.

\begin{proposition}
Let $D$ be an integral domain and
let $\mathcal{X}$ be a property of integral domains.
If $D$ is a maximal locally $\mathcal{X}$-domain,
then it is a quasi-local domain.
\end{proposition}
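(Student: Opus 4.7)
The plan is to argue by contradiction, leveraging Proposition \ref{NT} directly. Suppose that $D$ is a maximal locally $\mathcal{X}$-domain but is not quasi-local. I want to deduce that $D$ must in fact be a locally $\mathcal{X}$-domain, contradicting the definition.

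First, I would observe the simple fact that for any nonunit element $a \in D$, the Nagata transform $T(a)$ is a proper overring of $D$. Indeed, by the identity recalled before Proposition \ref{NT}, we have $T(a) = D_S = D[1/a]$ for $S = \{a^n \mid n \geq 1\}$; since $a$ is a nonunit, $1/a \notin D$, so the inclusion $D \subsetneq T(a)$ is strict. By the definition of a maximal locally $\mathcal{X}$-domain, every proper overring of $D$ is locally $\mathcal{X}$, so in particular $T(a)$ is a locally $\mathcal{X}$-domain for every nonunit $a \in D$.

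Now I would invoke Proposition \ref{NT}, whose hypothesis that $D$ is not quasi-local is precisely the one furnished by our contradiction assumption. That proposition then yields that $D$ itself is a locally $\mathcal{X}$-domain, contradicting the definition of maximal locally $\mathcal{X}$ (which forbids $D$ from being locally $\mathcal{X}$). Hence $D$ must be quasi-local.

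There is no real obstacle here: the argument is a one-line application of Proposition \ref{NT} once one notes that $T(a)$ is a proper overring for every nonunit $a$. The only subtlety is the mild bookkeeping check that the hypothesis ``$D$ is not quasi-local'' in Proposition \ref{NT} is exactly what the contradiction supplies, so no extra casework is needed.
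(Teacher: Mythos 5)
Your argument is correct, but it takes a genuinely different route from the paper's. The paper proceeds directly: since $D$ is not locally $\mathcal{X}$, there is a prime $P$ with $D_P$ lacking the property $\mathcal{X}$; if $D_P$ were a proper overring of $D$, it would be locally $\mathcal{X}$ by maximality and hence would itself have $\mathcal{X}$ (being quasi-local, $D_P$ equals its localization at $PD_P$), a contradiction; therefore $D_P=D$, which forces every element outside $P$ to be a unit and so $D$ is quasi-local. You instead assume $D$ is not quasi-local, observe that each Nagata transform $T(a)$ of a nonunit $a$ is a proper overring and hence locally $\mathcal{X}$ by maximality, and then feed this into the converse direction of Proposition \ref{NT} to conclude that $D$ is locally $\mathcal{X}$, contradicting the definition. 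Both proofs are sound and of comparable length; the paper's is more self-contained (it needs nothing beyond the definition and the remark that a quasi-local locally $\mathcal{X}$-domain has $\mathcal{X}$) and it identifies the offending prime as the unique maximal ideal, whereas yours is a clean application of the machinery already developed. One small point of bookkeeping: the element $0$ is a nonunit but $T(0)=K$ and the identity $T(a)=D_{\{a^n\}}$ only makes sense for $a\neq 0$; the paper's Proposition \ref{NT} tacitly restricts to nonzero nonunits (as in Brewer's lemma), so you should say ``nonzero nonunit'' to match that convention, after which your appeal to Proposition \ref{NT} is exactly as stated.
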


\begin{proof}
Suppose that $D$ is a maximal locally $\mathcal{X}$-domain.
Then $D$ is not a locally $\mathcal{X}$-domain,
so there is a prime ideal $P$ of $D$ such that
$D_P$ does not have the property $\mathcal{X}$.
If $D_P$ is a proper overring of $D$,
then $D_P$ is a locally $\mathcal{X}$-domain
since $D$ is a maximal locally $\mathcal{X}$-domain.
Hence $D_P$ has the property $\mathcal{X}$ as it is a quasi-local domain,
a contradiction.
Thus $D_P=D$,
which means that $D$ is a quasi-local domain.
\end{proof}

An integral domain $D$ is a {\it Mori domain} if
$D$ satisfies ascending chain condition on $v$-ideals of $D$.
Note that every valuation Mori domain is a DVR, and hence
we can easily obtain the fact that
any proper valuation overring of a maximal locally Mori domain is a DVR.

\begin{proposition}
Let $D$ be an integrally closed domain.
Then the following assertions hold.
\begin{enumerate}
\item[(1)]
If $D$ is a maximal locally SM domain,
then every proper quotient extension of $D$ is an almost Krull domain.
\item[(2)]
If $D$ is a maximal locally Mori domain,
then $D$ is either an intersection of DVRs or a valuation domain.
\end{enumerate}
\end{proposition}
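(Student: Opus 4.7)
Both parts begin from the preceding proposition, which asserts that a maximal locally $\mathcal{X}$-domain is quasi-local; I denote the unique maximal ideal of $D$ by $M$.

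For part (2), my plan is to split on whether $D$ is itself a valuation domain. If it is, the conclusion is trivially satisfied. Otherwise, any valuation overring $V$ of $D$ must be a proper overring (else $D$ would itself be a valuation domain); the maximal local Mori hypothesis then forces $V$ to be locally Mori, and being quasi-local, $V$ is itself Mori. Combined with the observation recorded just before the proposition, namely that every valuation Mori domain is a DVR, $V$ is a DVR. Since $D$ is integrally closed, it equals the intersection of all of its valuation overrings, each of which is now a DVR; thus $D$ is an intersection of DVRs.

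For part (1), I would take a proper quotient extension $D_S$ of $D$. As $D_S$ is a proper overring, it is locally SM by the maximal local SM hypothesis. Fix $\mathfrak{q} \in {\rm Spec}(D_S)$ and set $P = \mathfrak{q} \cap D$. Then $(D_S)_{\mathfrak{q}} = D_P$ is SM (as the $\mathfrak{q}$-localization of the locally SM domain $D_S$) and integrally closed (as a localization of the integrally closed $D$). The decisive ingredient is to invoke the known characterization that an integrally closed strong Mori domain is a Krull domain, which yields that $D_P$ is Krull. Since this holds for every $\mathfrak{q} \in {\rm Spec}(D_S)$, the domain $D_S$ is locally Krull, i.e., an almost Krull domain.

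The main obstacle is the step in part (1) that upgrades the quasi-local integrally closed SM domain $D_P = (D_S)_\mathfrak{q}$ to a Krull domain; this is the only nontrivial external input, and apart from it the remainder of both arguments is structural, relying on the contraction of primes from $D_S$ to $D$, on the elementary fact that a quasi-local locally-$\mathcal{X}$-domain is $\mathcal{X}$, and on the classical representation of an integrally closed domain as the intersection of its valuation overrings.
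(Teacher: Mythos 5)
Your proposal is correct and follows essentially the same route as the paper: in (1) both arguments reduce to the fact that an integrally closed SM domain is Krull, applied to the localizations of the (integrally closed, locally SM) proper quotient extension; in (2) both write the integrally closed $D$ as an intersection of valuation overrings and use that any \emph{proper} valuation overring, being a quasi-local locally Mori domain, is a valuation Mori domain and hence a DVR. Your version merely makes explicit a couple of steps the paper leaves implicit.
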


\begin{proof}
(1) Let $D_S$ be a proper quotient extension of $D$,
where $S$ is a multiplicative subset of $D$.
As the integrally closed property is stable under the quotient extension,
$D_S$ is an integrally closed locally SM domain.
Since every integrally closed SM domain is a Krull domain, 
$D_S$ is an almost Krull domain.

(2) Since $D$ is integrally closed,
we have
$D=\bigcap_{ \alpha \in \Lambda} V_{\alpha}$,
where $V_\alpha$ is a valuation domain
\cite[Theorem 57]{K74}.
If there is $\alpha \in \Lambda$ such that
$D=V_\alpha$, then we are done.
Also, if $D\subsetneq V_\alpha$ for each $\alpha \in \Lambda$,
then each $V_{\alpha}$ is a DVR.
Thus the assertion holds.
\end{proof}

Let $D$ be an integral domain and
let $S$ be a multiplicative subset of $D$.
Set $N(S) = \{a \in D \setminus \{0\} \,|\,
(a,s)_v = D \text{ for all $s \in S$}\}$.
Then $N(S)$ is a saturated multiplicative subset of $D$.
In this case, $N(S)$ is called the {\it $m$-complement} of $S$.

Let $S$ be a saturated multiplicative subset of $D$.
Recall that $S$ is a {\it splitting set} if
for each $a \in D \setminus \{0\}$,
there exist $s \in S$ and $t \in N(S)$ such that
$a = st$.
Now, we state several definitions of generalizations of splitting set used in the next result.
\begin{itemize}
\item A saturated multiplicative subset $S$ is an {\it almost splitting set} if
for each $a \in D \setminus \{0\}$,
there exist a positive integer $n \in \mathbb{N}$,
$s \in S$ and $t \in N(S)$ such that
$a^n = st$.
\item A multiplicative subset $S$ is a {\it $t$-splitting set} if
for each $a \in D \setminus \{0\}$,
$(a) = (IJ)_t$ for some integral ideals $I$ and $J$ of $D$,
where $I_t \cap (s) = sI_t$ for all $s \in S$, and $J_t \cap S \neq \emptyset$.
\end{itemize}
A useful fact about the above definitions is that every almost splitting set is a $t$-splitting set \cite[Proposition 2.3]{C05}.

\begin{proposition}
Let $D$ be an integral domain and
let $S$ be an almost splitting set of $D$.
Suppose that $\mathcal{X}$ is a property of integral domains.
Then $D$ is a $t$-locally $\mathcal{X}$-domain if and only if
$D_S$ and $D_{N(S)}$ are $t$-locally $\mathcal{X}$-domains.
\end{proposition}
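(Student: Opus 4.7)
The plan is to prove the two directions separately, with the forward direction being immediate and the converse carrying the real content. Specifically, Corollary~\ref{cffalt}(2) applied to each of the multiplicative subsets $S$ and $N(S)$ gives at once that $D_S$ and $D_{N(S)}$ are $t$-locally $\mathcal{X}$ whenever $D$ is. For the converse, I would fix a prime $t$-ideal $P$ of $D$ and aim to realize $D_P$ as $(D_S)_{PD_S}$ or $(D_{N(S)})_{PD_{N(S)}}$, so that the $t$-locally $\mathcal{X}$ hypothesis on the appropriate localization of $D$ may be applied.

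The central technical step is to show, for any nonzero prime $t$-ideal $P$ of $D$, that $P$ is disjoint from exactly one of $S$ and $N(S)$. For the ``at most one'' half, suppose $s\in P\cap S$ and $t\in P\cap N(S)$; then $(s,t)$ is a finitely generated subideal of $P$, and since $t\in N(S)$ and $s\in S$ we have $(s,t)_v=D$, whence
\[
D=(s,t)_v=(s,t)_t\subseteq P_t=P,
\]
a contradiction. For the ``at least one'' half, choose any nonzero $a\in P$; the almost splitting hypothesis provides $n\in\mathbb{N}$, $s\in S$, and $t\in N(S)$ with $a^n=st\in P$, so primality of $P$ forces $s\in P$ or $t\in P$. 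The trivial case $P=(0)$ is handled separately by noting that $D_{(0)}=K=(D_S)_{(0)}$.

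With $P$ disjoint from, say, $S$ (the other case being symmetric with $N(S)$ in place of $S$), I would use that $D_S$ is a flat, hence $t$-flat, overring of $D$ to conclude that $PD_S$ is a prime $t$-ideal of $D_S$ with $(D_S)_{PD_S}=D_P$; the hypothesis that $D_S$ is $t$-locally $\mathcal{X}$ then delivers property $\mathcal{X}$ for $D_P$. The main obstacle is establishing the dichotomy in the previous paragraph---in particular, noting that $(s,t)_t=(s,t)_v$ on finitely generated ideals is the bridge that converts the $v$-coprimality built into the definition of $N(S)$ into a statement about the $t$-ideal $P$. The remaining compatibility statement, that the extension of a prime $t$-ideal of $D$ to a flat overring remains a prime $t$-ideal, is a standard property of $t$-flat overrings that I would cite rather than reprove.
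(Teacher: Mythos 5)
Your overall architecture matches the paper's: the easy direction via Corollary \ref{cffalt}(2), and the converse by showing that a prime $t$-ideal $P$ of $D$ is disjoint from at least one of $S$ and $N(S)$ and then descending the property from the corresponding localization. Your verification that $P$ cannot meet both $S$ and $N(S)$ --- via $(s,t)_v=(s,t)_t\subseteq P_t=P$ --- is exactly the content of the step the paper dismisses as ``easy to check,'' and it is correct (and the ``at least one meets'' half is not even needed).

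There is, however, a genuine gap in your final step. You assert that, because $D_S$ is a flat (hence $t$-flat) overring of $D$, the extension $PD_S$ of a prime $t$-ideal $P$ disjoint from $S$ is again a prime $t$-ideal of $D_S$, and you propose to cite this as ``a standard property of $t$-flat overrings.'' No such property holds. What is true for ($t$-)flat overrings, and what Proposition \ref{ffalt}(2) actually uses, is the \emph{contraction} statement: a prime $t$-ideal of the overring contracts to a prime $t$-ideal of $D$. The \emph{extension} statement fails for general localizations: for a finitely generated ideal $J$ one only gets $J_vD_S\subseteq (JD_S)_v$, and indeed there exist domains with a prime $t$-ideal $P$ such that $PD_P$ is not a $t$-ideal of $D_P$ (failure of ``well-behavedness''), even though $D_P$ is a flat overring. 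This is precisely where the splitting hypotheses must enter: the paper obtains that $PD_S$ is a prime $t$-ideal of $D_S$ from the fact that $S$ is a $t$-splitting set (every almost splitting set is one), via \cite[Theorem 4.9]{AAZ01}, and in the case $P\cap S\neq\emptyset$ it must additionally invoke \cite[Proposition 2.4]{ADZ04} to know that $N(S)$ is itself an almost splitting (hence $t$-splitting) set before applying the same theorem to $PD_{N(S)}$. Your appeal to ``symmetry'' between $S$ and $N(S)$ quietly assumes this last fact as well. So the dichotomy you prove is right, but the bridge from $P\cap S=\emptyset$ to ``$PD_S$ is a prime $t$-ideal of $D_S$'' requires the $t$-splitting machinery, not flatness.
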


\begin{proof}
Suppose that $D_S$ and $D_{N(S)}$ are $t$-locally $\mathcal{X}$-domains and
let $P$ be a prime $t$-ideal of $D$.
Then $PD_S$ is a prime $t$-ideal of $D_S$
when $P$ is disjoint from $S$
\cite[Theorem 4.9]{AAZ01}.
Hence $D_P = (D_S)_{PD_S}$ has the property $\mathcal{X}$.
Now, assume that $P \cap S \neq \emptyset$.
Then it is easy to check that $P \cap N(S) = \emptyset$.
Note that $N(S)$ is an almost splitting set of $D$
\cite[Proposition 2.4]{ADZ04}.
Hence again by \cite[Theorem 4.9]{AAZ01},
$PD_{N(S)}$ is a prime $t$-ideal of $D_{N(S)}$.
This implies that $D_P = (D_{N(S)})_{PD_{N(S)}}$ has the property $\mathcal{X}$.
Thus $D_P$ has the property $\mathcal{X}$
for any $P \in t \mhyphen {\rm Spec}(D)$,
which says that $D$ is a $t$-locally $\mathcal{X}$-domain.
The converse follows directly from Corollary \ref{cffalt}(2).
\end{proof}

Now, we study some local properties of a polynomial ring and some of its quotient extensions.
Let $D$ be an integral domain and
consider the following three multiplicative subsets of $D[X]$:
\begin{eqnarray*}
A &=& \{f \in D[X] \,|\, f(0) = 1\},\\
U &=& \{f \in D[X] \,|\, f \text{ is monic}\},\text{ and}\\
N &=& \{f \in D[X] \,|\, c(f) = D\},
\end{eqnarray*}
where $c(f)$ is the ideal of $D$ generated by
the coefficients of $f$.
In this case,
$D[X]_A$ (respectively, $D[X]_U$ and $D[X]_N$)
is called the {\it Anderson ring}
(respectively, {\it Serre's conjecture ring} and
{\it Nagata ring}) of $D$.
It is clear that $D[X]_A \subsetneq D[X]_U \subseteq D[X]_N$ and
$D[X]_U$ is the quotient ring of $D[X]_A$ by $U$, and
$D[X]_N$ is the quotient ring of $D[X]_U$ by $N$.
Useful facts when studying the local properties of the above rings are that,
for $P \in {\rm Spec}(D)$,
$(D[X]_A)_{(P+XD[X])_A} = D_P[X]_{A_P}$ and $(D[X]_N)_{PD[X]_N} = D_P[X]_{N_P}$,
where $A_P = \{f \in D_P[X] \,|\, f(0) \text{ is a unit in }D_P\}$ and
$N_P = \{f \in D_P[X] \,|\, c(f) = D_P\}$
\cite[Lemma 2.6(4)]{Baek 2022} and \cite[Example 5.5.2(2)]{wang book}, respectively.

\begin{theorem}\label{local property QE}
Let $D$ be an integral domain and
let $\mathcal{X}$ be a property of integral domains
which is stable under the quotient extension and
the polynomial extension.
Suppose that any integral domain has the property $\mathcal{X}$ when
its Nagata ring has the property $\mathcal{X}$.
Then the following assertions are equivalent.
\begin{itemize}
\item[(1)]
$D$ is a locally $\mathcal{X}$-domain.
\item[(2)]
$D[X]$ is a locally $\mathcal{X}$-domain.
\item[(3)]
$D[X]_A$ is a locally $\mathcal{X}$-domain.
\item[(4)]
$D[X]_U$ is a locally $\mathcal{X}$-domain.
\item[(5)]
$D[X]_N$ is a locally $\mathcal{X}$-domain.
\end{itemize}
\end{theorem}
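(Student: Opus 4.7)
The plan is to establish the cycle $(1) \Rightarrow (2) \Rightarrow (3) \Rightarrow (4) \Rightarrow (5) \Rightarrow (1)$, which distributes the three hypotheses on $\mathcal{X}$ across the chain: polynomial extension stability drives $(1)\Rightarrow(2)$, quotient extension stability handles $(2)\Rightarrow(3)\Rightarrow(4)\Rightarrow(5)$, and the Nagata-ring descent hypothesis closes the loop via $(5)\Rightarrow(1)$. For $(1)\Rightarrow(2)$, I would pick a prime $Q$ of $D[X]$, set $P := Q \cap D$, and use transitivity of localization together with the fact that $Q$ is disjoint from $D \setminus P$ to identify $D[X]_Q = (D_P[X])_{QD_P[X]}$. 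Since $D_P$ has $\mathcal{X}$ by hypothesis~$(1)$, polynomial stability promotes this to $D_P[X]$, and a further application of quotient stability yields $\mathcal{X}$ for $D[X]_Q$, so $D[X]$ is locally $\mathcal{X}$.

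For $(2)\Rightarrow(3)\Rightarrow(4)\Rightarrow(5)$, the three rings $D[X]_A$, $D[X]_U$, $D[X]_N$ form a chain of successive quotient extensions (as recalled in the paragraph preceding the theorem): $D[X]_A$ is a quotient ring of $D[X]$, $D[X]_U$ is the quotient ring of $D[X]_A$ by $U$, and $D[X]_N$ is the quotient ring of $D[X]_U$ by $N$. Consequently Corollary~\ref{cffalt}(1) applies at each step, and each of the three implications is immediate with no extra work.

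The central step is $(5)\Rightarrow(1)$. For $P \in \operatorname{Spec}(D)$, I would invoke the identification $(D[X]_N)_{PD[X]_N} = D_P[X]_{N_P}$ from \cite[Example 5.5.2(2)]{wang book}, which exhibits this localization as the Nagata ring of $D_P$. Hypothesis $(5)$ guarantees that this localization, being obtained from $D[X]_N$ at a prime ideal, has the property $\mathcal{X}$; then the third hypothesis on $\mathcal{X}$, that the property descends from a Nagata ring back to its base, forces $D_P$ itself to satisfy $\mathcal{X}$, so $D$ is locally $\mathcal{X}$. The only subtle point in the whole argument is bookkeeping these two ring-theoretic identifications of localizations --- $D[X]_Q = (D_P[X])_{QD_P[X]}$ in the forward direction and $(D[X]_N)_{PD[X]_N} = D_P[X]_{N_P}$ in the reverse direction --- once they are quoted, each implication reduces to a direct application of one of the three stability hypotheses imposed on $\mathcal{X}$.
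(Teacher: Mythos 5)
Your proposal is correct and follows essentially the same route as the paper: the same localization identity $D[X]_{\mathfrak{p}}=D_P[X]_{\mathfrak{p}D_P[X]}$ for $(1)\Rightarrow(2)$, the same chain of quotient extensions for $(2)\Rightarrow(3)\Rightarrow(4)\Rightarrow(5)$, and the same identification $(D[X]_N)_{PD[X]_N}=D_P[X]_{N_P}$ plus the Nagata-descent hypothesis to close the loop. The only (immaterial) difference is that you run the final step over all primes directly, whereas the paper checks maximal ideals and then invokes Proposition~\ref{ffalt}(3).
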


\begin{proof}
(1) $\Rightarrow$ (2)
Let $\mathfrak{p}$ be a prime ideal of $D[X]$ and
let $P = \mathfrak{p}\cap D$.
Then $D[X]_{\mathfrak{p}} = D_P[X]_{\mathfrak{p}D_P[X]}$ \cite[Lemma 13.1]{H88}.
Since $D_P$ has the property $\mathcal{X}$,
we obtain that $D[X]_{\mathfrak{p}}$ also has the property $\mathcal{X}$
by our assumption.
Hence $D[X]$ is a locally $\mathcal{X}$-domain.

The implications (2) $\Rightarrow$ (3) $\Rightarrow$ (4)
follow directly from Corollary \ref{cffalt}(2)
since $D[X]_A$ (respectively, $D[X]_U$ and $D[X]_N$)
is a quotient extension of $D[X]$
(respectively, $D[X]_A$ and $D[X]_U$).

(4) $\Rightarrow$ (1)
Let $M$ be a maximal ideal of $D$.
Then $MD[X]_N$ is a maximal ideal of $D[X]_N$,
so $(D[X]_N)_{MD[X]_N} = D_M[X]_{N_M}$ has the property $\mathcal{X}$.
By our assumption,
we obtain that $D_M$ has the property $\mathcal{X}$.
Thus $D$ is a locally $\mathcal{X}$-domain
by Proposition \ref{ffalt}(3).
\end{proof}

Recall that $D$ is an {\it MZ-valuation domain} if
$D_P$ is a valuation domain for any associated prime ideal $P$ of $D$.
Note that an MZ-valuation domain is exactly a locally essential domain.

The following example provides properties $\mathcal{X}$
for which integral domains satisfy the assumptions of Theorem \ref{local property QE},
as well as properties $\mathcal{X}$
for which one of the assumptions of Theorem \ref{local property QE} fails,
while the equivalent conditions of Theorem~\ref{local property QE} are still satisfied.

\begin{example}
{\rm
(1) Let $\mathcal{Y}$ be a property of an integral domain belonging to one of the following classes:
P$v$MDs, Noetherian domains, SM domains, and Krull domains.
In \cite{OT21}, the authors proved that $\mathcal{Y}$ satisfies the condition of
the property $\mathcal{X}$ in Theorem \ref{local property QE}.
Hence either $D$, $D[X]$, $D[X]_A$, $D[X]_U$, and $D[X]_N$ are locally $\mathcal{Y}$-domains or not.

(2) In \cite{H81}, the authors showed that an essential domain is not stable under the quotient extension in general.
However, in \cite{KT21}, the authors proved that
either $D$, $D[X]$, $D[X]_U$, and $D[X]_N$ are MZ-valuation domains or not.
Thus either $D$, $D[X]$, $D[X]_A$, $D[X]_U$, and $D[X]_N$ are locally essential domains or not
since locally essential domain is stable under the quotient extension.
}
\end{example}

Let $D$ be an integral domain.
Consider the set $N_v = \{f \in D[X] \,|\, c(f)_v = D\}$.
Then $N_v$ is a multiplicative subset of $D[X]$,
so we obtain the ring $D[X]_{N_v}$.
In this case, $D[X]_{N_v}$ is called the
{\it $t$-Nagata ring} of $D$.
Now, we can investigate some $t$-local properties of the polynomial ring, the Anderson ring, the Serre's conjecture ring, and the ($t$-)Nagata ring.

\begin{theorem}\label{t-local property QE}
Let $D$ be an integral domain and
let $\mathcal{X}$ be a property of integral domains which is stable under the quotient extension.
Suppose that a DVR has the property $\mathcal{X}$ and
suppose that an integral domain has the property $\mathcal{X}$
if and only if its Nagata ring has the property $\mathcal{X}$.
Then the following assertions are equivalent.
\begin{itemize}
\item[(1)]
$D$ is a $t$-locally $\mathcal{X}$-domain.
\item[(2)]
$D[X]$ is a $t$-locally $\mathcal{X}$-domain.
\item[(3)]
$D[X]_A$ is a $t$-locally $\mathcal{X}$-domain.
\item[(4)]
$D[X]_U$ is a $t$-locally $\mathcal{X}$-domain.
\item[(5)]
$D[X]_N$ is a $t$-locally $\mathcal{X}$-domain.
\item[(6)]
$D[X]_{N_v}$ is a $t$-locally $\mathcal{X}$-domain.
\item[(7)]
$D[X]_{N_v}$ is a locally $\mathcal{X}$-domain.
\end{itemize}
\end{theorem}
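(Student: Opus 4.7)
The plan is to prove all seven assertions equivalent through the cycle $(1)\Rightarrow(2)\Rightarrow(3)\Rightarrow(4)\Rightarrow(5)\Rightarrow(6)\Rightarrow(7)\Rightarrow(1)$. Four of these steps are essentially free: $(7)\Rightarrow(6)$ is immediate since every locally $\mathcal{X}$-domain is automatically $t$-locally $\mathcal{X}$, and $(2)\Rightarrow(3)\Rightarrow(4)\Rightarrow(5)\Rightarrow(6)$ follows at once from Corollary~\ref{cffalt}(2), because in the chain $D[X]\subseteq D[X]_A\subseteq D[X]_U\subseteq D[X]_N\subseteq D[X]_{N_v}$ each ring is a quotient extension of its predecessor and the $t$-locally $\mathcal{X}$ property passes to quotient extensions.

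For $(1)\Rightarrow(2)$ I would invoke the standard dichotomy for nonzero prime $t$-ideals $\mathfrak{p}$ of $D[X]$: either $\mathfrak{p}\cap D=(0)$, in which case $K\subseteq D[X]_{\mathfrak{p}}$ forces $D[X]_{\mathfrak{p}}=K[X]_{\mathfrak{p}K[X]}$, a DVR, which has property $\mathcal{X}$ by the DVR hypothesis; or $\mathfrak{p}=P[X]$ for some $P\in t\text{-}{\rm Spec}(D)$, in which case the identification $D[X]_{P[X]}=D_P[X]_{N_P}$ displays the localization as the Nagata ring of $D_P$, and since $D_P$ has property $\mathcal{X}$ by (1), the Nagata-iff hypothesis transfers property $\mathcal{X}$ to $D_P[X]_{N_P}$.

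The remaining implications $(6)\Rightarrow(7)$ and $(7)\Rightarrow(1)$ both rest on the same structural fact about the $t$-Nagata ring: the maximal ideals of $D[X]_{N_v}$ are exactly $\{P[X]_{N_v}:P\in t\text{-}{\rm Max}(D)\}$, each such maximal ideal is a $t$-ideal of $D[X]_{N_v}$, and $(D[X]_{N_v})_{P[X]_{N_v}}=D_P[X]_{N_P}$. This gives ${\rm Max}(D[X]_{N_v})=t\text{-}{\rm Max}(D[X]_{N_v})$, so Proposition~\ref{ffalt}(3) promotes a $t$-local property on $D[X]_{N_v}$ to a full local one, yielding $(6)\Rightarrow(7)$. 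For $(7)\Rightarrow(1)$, Proposition~\ref{ffalt}(3) reduces (7) to checking that each $D_P[X]_{N_P}$ (for $P\in t\text{-}{\rm Max}(D)$) has property $\mathcal{X}$, and the Nagata-iff hypothesis then pulls property $\mathcal{X}$ back to every $D_P$, which is (1) by Proposition~\ref{ffalt}(3) again.

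The main obstacle will be to cleanly deploy the two structural inputs driving the argument: (a) the classification of prime $t$-ideals of $D[X]$, in particular that whenever $\mathfrak{p}\cap D=P\neq(0)$ one automatically has $\mathfrak{p}=P[X]$ with $P$ a prime $t$-ideal of $D$; and (b) the description of ${\rm Max}(D[X]_{N_v})$ together with the identification $(D[X]_{N_v})_{P[X]_{N_v}}\cong D_P[X]_{N_P}$. Both facts are classical in the $t$-operation and Nagata-ring literature and should be citable from sources already in the paper, but their role is essential: once (a) and (b) are in hand, the cycle closes mechanically.
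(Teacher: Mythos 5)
Your proposal is correct and follows essentially the same route as the paper: the dichotomy for $t$-primes of $D[X]$ (uppers to zero give DVRs, extended primes give Nagata rings of $D_P$) for $(1)\Rightarrow(2)$, stability under quotient extensions via Corollary~\ref{cffalt}(2) for the chain $(2)\Rightarrow\cdots\Rightarrow(6)$, the fact that ${\rm Max}(D[X]_{N_v})=t\text{-}{\rm Max}(D[X]_{N_v})$ (which the paper phrases as $D[X]_{N_v}$ being a DW-domain) for $(6)\Rightarrow(7)$, and the identification $(D[X]_{N_v})_{MD[X]_{N_v}}=D_M[X]_{N_M}$ together with the Nagata-ring hypothesis for $(7)\Rightarrow(1)$. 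The only small difference is that in $(1)\Rightarrow(2)$ the paper quantifies over maximal $t$-ideals of $D[X]$ and then invokes Proposition~\ref{ffalt}(3)(b), so it needs only the standard Houston--Zafrullah description of $t\text{-}{\rm Max}(D[X])$, whereas your version ranges over all prime $t$-ideals and therefore leans on the (true but slightly stronger, and less routinely cited) classification of arbitrary prime $t$-ideals of $D[X]$.
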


\begin{proof}
(1) $\Rightarrow$ (2)
Suppose that $D$ is a $t$-locally $\mathcal{X}$-domain.
Let $\mathfrak{m}$ be a maximal $t$-ideal of $D[X]$.
Then either $\mathfrak{m} \cap D = (0)$ or
$\mathfrak{m} = MD[X]$ for some $M \in t\text{-Max}(D)$.
If $\mathfrak{m} \cap D = (0)$,
then $D[X]_{\mathfrak{m}}$ is a DVR,
so $D[X]_{\mathfrak{m}}$ has the property $\mathcal{X}$ by our assumption.
On the other hand,
suppose that $\mathfrak{m} = MD[X]$
for some $M \in t\text{-Max}(D)$.
Then $D_M$ has the property $\mathcal{X}$.
Hence $D[X]_{MD[X]} = D_M[X]_{N_M}$
also has the property $\mathcal{X}$.
Hence $D[X]_{\mathfrak{m}}$ has the property $\mathcal{X}$.
By Proposition \ref{ffalt},
$D[X]$ is a $t$-locally $\mathcal{X}$-domain.

The implications (2) $\Rightarrow$ (3) $\Rightarrow$ (4) $\Rightarrow$ (5) $\Rightarrow$ (6) hold
since the property $\mathcal{X}$ is stable under the quotient extension,
and the implication (6) $\Rightarrow$ (7) holds since $D[X]_{N_v}$ is a DW-domain.

(7) $\Rightarrow$ (1)
Suppose that $D[X]_{N_v}$ is a locally $\mathcal{X}$-domain.
Let $M$ be a maximal $t$-ideal of $D$.
Then $MD[X]_{N_v}$ is a maximal ideal of $D[X]_{N_v}$.
Hence $D_M[X]_{N_M} = (D[X]_{N_v})_{MD[X]_{N_v}}$ has the property $\mathcal{X}$,
so $D_M$ has the property $\mathcal{X}$
by our assumption.
Thus $D$ is a $t$-locally $\mathcal{X}$-domain.
\end{proof}

\begin{example}
{\rm
It is easy to show that every DVR is a Krull domain,
and $D$ is a Krull domain if and only if $D[X]_N$ is a Krull domain \cite[Theorem 5.2(1)]{AAM85}.
Hence either $D$, $D[X]$, $D[X]_A$, $D[X]_U$, $D[X]_N$, and $D[X]_{N_v}$
are $t$-locally Krull domains or not.
Moreover, $D$ is a $t$-locally Krull domain if and only if $D[X]_{N_v}$ is a locally Krull domain.
}
\end{example}

\section{Locally Krull-like domains in pullback constructions}

In this section, we will focus on the transfer of some local properties to pullbacks.

To avoid unnecessary repetition, let us fix some necessary notation.
Let $T$ be an integral domain, $M$ a maximal ideal,
$K = T/M$, and $D$ a proper subring of $K$ with the quotient field $k$.
Consider the canonical epimorphism $\varphi: T\to K$.
Then we obtain the following pullback diagram.

\begin{center}
$
\begin{tikzcd}
R:=\varphi^{-1}(D) \ar[r] \ar[d, hook] & D \ar[d, hook] \\
T \ar[r,"\varphi"] & K=T/M
\end{tikzcd}
$
\end{center}
We shall refer to this as a pullback diagram of type $(\square)$.
The reader can refer to \cite{FG96,GH00}
for more details on the ideal structure of
this construction and its ring-theoretic properties.
By localizing $M$, we can also obtain the following pullback diagram ($\square_M$).
\begin{center}
$
\begin{tikzcd}
R_M \ar[r] \ar[d, hook] & k \ar[d, hook] \\
T_M \ar[r] & K
\end{tikzcd}
$
\end{center}
Note that the pullback diagram ($\square_M$) may be considered as of type ($\square$) when $D$ is a field.

Before we start,
we investigate some useful facts in the above pullback diagram of type ($\square$).
The statements of the following lemma may be found in
\cite{ABDFK88} and \cite[Section 1]{GH00}
which will be used frequently in the sequel without explicit mention.

\begin{lemma}\label{LemPB1}
For the pullback diagram of type $(\square)$,
the following assertions hold.
\begin{enumerate}
\item[(1)]
$M=(R:T)$ and $D\cong R/M$.
\item[(2)]
If $P$ is a prime ideal $($respectively, maximal ideal$)$ of $D$,
then $\varphi^{-1}(P)$ is a prime ideal $($respectively, maximal ideal$)$ of $R$.
\item[(3)]
If $P$ is a prime ideal $($respectively, maximal ideal$)$ of $R$ with $M\subseteq P$,
then there is a unique prime ideal $($respectively, maximal ideal$)$ $Q$ of $D$ such that $P=\varphi^{-1}(Q)$.
\item[(4)]
If $P$ is a prime ideal $($respectively, maximal ideal$)$ of $R$ which does not contain $M$,
then there is a unique prime ideal $($respectively, maximal ideal$)$ $Q$ of $T$ such that
$P=Q\cap R$.
In this case, $R_P=T_Q$.
\item[(5)]
$k=K$ if and only if $R_M=T_M$, if and only if $T$ is a flat overring of $R$.
\end{enumerate}
\end{lemma}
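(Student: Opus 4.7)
The plan is to prove the five parts in order, using the surjection $\varphi|_R\colon R\twoheadrightarrow D$ with kernel $M$ as the central tool.

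Part (1) is a conductor calculation. The inclusion $M\subseteq(R:T)$ is immediate since $M$ is an ideal of $T$ contained in $R$; for the reverse, if $x\in T$ with $xT\subseteq R$, then $\varphi(x)K=\varphi(xT)\subseteq D\subsetneq K$, which forces $\varphi(x)=0$. The isomorphism $D\cong R/M$ is then the first isomorphism theorem applied to $\varphi|_R$. Parts (2) and (3) follow from the same surjection: in (2), composing $R\xrightarrow{\varphi|_R}D\twoheadrightarrow D/P$ gives kernel $\varphi^{-1}(P)$ with domain (field) quotient; in (3), $M\subseteq P$ means $P/M$ corresponds via the lattice isomorphism theorem to a unique prime (maximal) ideal $Q$ of $D$ with $\varphi^{-1}(Q)=P$.

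For part (4), the key observation is $(R:T)=M$ from (1), so $mT\subseteq R$ for every $m\in M$. If $P$ is a prime of $R$ with $M\not\subseteq P$, pick $m\in M\setminus P$ and conclude $T\subseteq m^{-1}R\subseteq R_P$. Setting $Q:=PR_P\cap T$, one verifies $T_Q=R_P$ by showing each contains the other: elements of $R\setminus P$ become units in $R_P$ and hence lie in $T\setminus Q$, giving $R_P\subseteq T_Q$, while $T_Q\subseteq R_P$ is automatic. Uniqueness of $Q$ is forced because any prime of $T$ contracting to $P$ must coincide with $PR_P\cap T$ inside the local ring $R_P=T_Q$.

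Part (5), the most substantive, I would prove cyclically: $(k=K)\Rightarrow(R_M=T_M)\Rightarrow(T\text{ flat over }R)\Rightarrow(k=K)$. For the first step, given $t\in T$, write $\varphi(t)=d_1/d_2$ with $d_1,d_2\in D$ and $d_2\neq 0$ (using $K=\operatorname{Frac}(D)$), lift to $r_1,r_2\in R$ with $r_2\notin M$, and observe that $r_2t-r_1\in\ker\varphi=M\subseteq R$ forces $r_2t\in R$, so $t=(r_2t)/r_2\in R_M$; hence $T\subseteq R_M$ and thus $T_M=R_M$. For the second step, I would invoke Richman's criterion that an overring is flat iff for every prime $P$ of $R$ either $T_P=R_P$ or $PT=T$: this is handled by (4) when $M\not\subseteq P$, by hypothesis when $P=M$, and when $P\supsetneq M$ because $\varphi(P)$ is a nonzero ideal of the subring $D$ of the field $K$, so $\varphi(PT)=\varphi(P)K=K$, giving $PT=T$. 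Finally, $T$ flat over $R$ forces $R_M=T_M$ (via Richman at $P=M$, since $MT=M\neq T$), whence the residue field of $R_M$, namely $\operatorname{Frac}(R/M)=k$, coincides with that of $T_M$, namely $T/M=K$. I expect the main obstacle to be the denominator manipulation in the first implication, where one must carefully choose the lifts so that $r_2$ avoids $M$; once that clearing-denominators trick is in hand, the rest of (5) is bookkeeping with the Richman criterion.
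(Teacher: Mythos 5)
Your proof is correct, but be aware that the paper offers no proof of this lemma at all: it simply records that the assertions ``may be found in \cite{ABDFK88} and \cite[Section 1]{GH00}'' and uses them freely afterwards. What you have written is a self-contained reconstruction of the standard arguments from that literature, and the key points all check out: the conductor computation in (1) correctly exploits that $D$ is a \emph{proper} subring of the field $K$ (so $\varphi(x)K\subseteq D$ forces $\varphi(x)=0$); (2) and (3) are immediate from the surjection $R\twoheadrightarrow D$ with kernel $M$; and in (4) the inclusion $T\subseteq R_P$ obtained from an element $m\in M\setminus P$ with $mT\subseteq R$ makes $Q=PR_P\cap T$ work exactly as you describe. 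Two small elisions are worth filling in, though neither is a genuine gap. First, in (4) you should also record why $Q$ is \emph{maximal} when $P$ is: any prime of $T$ properly containing $Q$ still contracts to $P$ (its contraction contains $P$ and is proper), hence equals $Q$ by your uniqueness claim. Second, in (5) the step from $T\subseteq R_M$ to $T_M=R_M$ needs one more line: the contraction to $T$ of the maximal ideal $MR_M$ of $R_M$ is a proper prime of $T$ containing $M$, hence equals $M$ by maximality of $M$ in $T$, so every element of $T\setminus M$ becomes a unit in $R_M$. Finally, your form of Richman's criterion (for each $P\in\mathrm{Spec}(R)$, either $PT=T$ or $T_{R\setminus P}=R_P$) is a correct reformulation of the usual one ($T_N=R_{N\cap R}$ for every maximal ideal $N$ of $T$); with the latter the verification is even shorter, since every maximal ideal $N\neq M$ of $T$ contracts to a prime of $R$ not containing $M$ and is handled by (4), while $N=M$ is exactly the hypothesis $R_M=T_M$, so the case $P\supsetneq M$ need not be treated separately.
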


In \cite{KOT22}, the authors proved that
for the pullback diagram of type ($\square$),
$R$ is a locally Noetherian domain if and only if
$T$ is a locally Noetherian domain, $D$ is a field and $[K:k]$ is finite.
Motivated by this result,
it is natural to ask whether an analogous statement holds
in the setting of SM domains.
In order to address this question,
we first recall the following preparatory lemmas.

\begin{lemma}[{\cite[Theorem 2.16]{OT21}}]\label{LemPB2}
For the pullback diagram of type $(\square)$,
$R$ is a $t$-locally Mori domain if and only if
$D$ is a field,
$T$ is a $t$-locally Mori domain
and $T_M$ is a Mori domain.
\end{lemma}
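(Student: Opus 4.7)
The plan is to prove each direction separately, using the classification of prime ideals of $R$ from Lemma \ref{LemPB1} together with the key preliminary that the conductor $M = (R : T)$ is itself a prime $v$-ideal of $R$: from $MT = M \subseteq R$ we have $T \subseteq M^{-1}$, so $M_v = (M^{-1})^{-1} \subseteq T^{-1} = (R:T) = M$, and the reverse inclusion is automatic. In particular, $M$ is always a prime $t$-ideal of $R$, so $R$ being $t$-locally Mori forces $R_M$ to be Mori.

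For the forward direction, assume $R$ is $t$-locally Mori. Localizing the diagram $(\square)$ at $M$ produces the diagram $(\square_M)$, in which $T_M$ is local and $R_M$ is the pullback of $k \hookrightarrow K$ along $T_M \to K$. Invoking the known Mori-pullback characterization for a local base ring, the fact that $R_M$ is Mori forces $T_M$ to be Mori. To extract that $D$ is a field, I would argue by contradiction: if $D$ admitted a nonzero prime $Q$, then by Lemma \ref{LemPB1}(2) the prime $P := \varphi^{-1}(Q) \supsetneq M$ would contribute a prime ideal of $R$ whose localization $R_P$ is a pullback structure incompatible with being Mori, contradicting the hypothesis. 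Finally, to show $T$ is $t$-locally Mori I would treat the two cases for a prime $t$-ideal $Q$ of $T$: if $Q = M$, then $T_Q = T_M$ is already known to be Mori; if $Q \not\supseteq M$, then Lemma \ref{LemPB1}(4) yields $R_{Q \cap R} = T_Q$, and a direct comparison of $v$-closures in the shared quotient field shows that $Q \cap R$ is a prime $t$-ideal of $R$, so the hypothesis gives $T_Q$ Mori.

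For the backward direction, assume $D$ is a field, $T$ is $t$-locally Mori, and $T_M$ is Mori. Since $D$ is a field, Lemma \ref{LemPB1}, parts (2) and (3), shows that $M$ is the only prime of $R$ containing $M$. For any prime $t$-ideal $P$ of $R$, either $P = M$, in which case the Mori-pullback characterization applied to $(\square_M)$ with $k = D$ and $T_M$ Mori yields $R_M$ Mori, or $P \not\supseteq M$, in which case Lemma \ref{LemPB1}(4) gives a unique $Q \in \mathrm{Spec}(T)$ with $P = Q \cap R$ and $R_P = T_Q$; a $v$-closure comparison then shows $Q$ must be a prime $t$-ideal of $T$, so the hypothesis forces $R_P = T_Q$ to be Mori. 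Hence every prime $t$-ideal of $R$ localizes to a Mori domain.

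The main obstacle I anticipate is the bidirectional transfer of the prime $t$-ideal property between $R$ and $T$ for primes disjoint from $M$; this requires a careful comparison of $v$-closures in the common quotient field $K_R = K_T$ exploiting the fact that $R$ and $T$ agree on localizations outside $V(M)$. A second subtle input, used in both implications, is the precise Mori-pullback theorem for the local diagram $(\square_M)$, which equates $R_M$ being Mori with $T_M$ being Mori plus the condition that $D$ is a field, and which carries most of the weight of both directions through the conductor prime $M$.
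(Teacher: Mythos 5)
The paper does not prove this statement at all: it is imported verbatim as \cite[Theorem 2.16]{OT21}, so there is no in-paper argument to compare yours against. Judged on its own terms, your outline has the right architecture (the divisoriality of $M=(R:T)$ is correctly and cleanly established, and the reduction of the primes of $R$ into those containing $M$ and those meeting $T\setminus M$ is the standard and correct decomposition), but it contains one genuine gap.

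The gap is in extracting ``$D$ is a field'' in the forward direction. Your closing paragraph claims this condition comes out of the Mori-pullback theorem applied to the local diagram $(\square_M)$; it cannot, because in $(\square_M)$ the ring sitting above $K$ is $k$, the quotient field of $D$, which is a field no matter what $D$ is. Thus $R_M$ being Mori yields $T_M$ Mori but says nothing about $D$. Your fallback is the contradiction argument at $P:=\varphi^{-1}(Q)$ for a nonzero prime $Q$ of $D$, but this only contradicts the hypothesis if $P$ is a prime \emph{$t$-ideal} of $R$ -- the hypothesis ``$t$-locally Mori'' is silent about $R_P$ otherwise -- and you give no reason why such a $P$ is a $t$-ideal. (Your own list of anticipated obstacles only mentions the $t$-ideal transfer for primes \emph{not} containing $M$; the missing transfer is precisely for primes containing $M$.) A repair is available: choose $Q$ to be a maximal $t$-ideal of $D$ and show $\varphi^{-1}(Q)$ is a $t$-ideal of $R$ via the identity $I_v=\varphi^{-1}\bigl((I/M)_v\bigr)$ for ideals $I\supseteq M$ (Fontana--Gabelli), applied to $(J+M)_v\supseteq J_v$ for finitely generated $J\subseteq\varphi^{-1}(Q)$; then $R_{\varphi^{-1}(Q)}$ is Mori and the pullback characterization of Mori domains over the non-field $D_Q$ gives the contradiction. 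You also lean on two unproved external inputs -- the Mori-pullback characterization for a local base and the equivalence ``$Q$ is a $t$-ideal of $T$ iff $Q\cap R$ is a $t$-ideal of $R$'' for primes not containing $M$ -- both of which are true and standard, but in a written proof they need citations rather than the phrase ``a direct comparison of $v$-closures.''
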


\begin{lemma}[{\cite[Theorem 3.11]{M03}}]\label{LemPB3}
For the pullback diagram of type $(\square)$,
$R$ is an SM domain if and only if
$D$ is a field,
$T$ is an SM domain,
$T_M$ is Noetherian
and $[K:k]$ is finite.
\end{lemma}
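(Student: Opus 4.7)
The overall plan is to bootstrap from Lemma \ref{LemPB2}: since being SM is strictly stronger than being $t$-locally Mori (Noetherian localizations in place of Mori, plus finite $t$-character), any SM domain is automatically $t$-locally Mori, and Lemma \ref{LemPB2} immediately supplies ``$D$ is a field, $T$ is $t$-locally Mori, and $T_M$ is Mori'' in the forward direction. What remains is to promote ``Mori'' to ``Noetherian'' at the right places, extract the finite residue extension $[K:k]$, and handle the finite $t$-character clause of SM.

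For the direction ($\Rightarrow$), I would first note that $M=(R:T)=T^{-1}$ is always divisorial, hence a $t$-ideal of $R$, and is maximal in $R$ since $D$ is a field; so $M$ is a maximal $t$-ideal, and SM of $R$ forces $R_M$ Noetherian. Applying the classical Noetherian-pullback criterion to diagram $(\square_M)$—the conductor of $R_M\hookrightarrow T_M$ is $MT_M$, and $R_M$ Noetherian forces $T_M$ to be a module-finite extension—yields both $T_M$ Noetherian and $[K:k]=[T_M/MT_M:R_M/MT_M]$ finite. To upgrade $T$ to SM, I would use the pullback correspondence at the level of prime $t$-ideals: a prime $t$-ideal $Q$ of $T$ with $M\not\subseteq Q$ gives a prime $t$-ideal $Q\cap R$ of $R$ with $T_Q=R_{Q\cap R}$, whence $T_Q$ is Noetherian by SM of $R$; the case $M\subseteq Q$ collapses to $Q=M$ by the field hypothesis and is already handled. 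Finite $t$-character of $T$ transfers from that of $R$ along the same correspondence, using that every nonzero nonunit of $T$ can be written as $b/c$ with $b,c\in R$, $b\ne 0$, and that the maximal $t$-ideals of $T$ containing it avoiding $M$ biject onto maximal $t$-ideals of $R$ containing $b$.

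For the direction ($\Leftarrow$), given the four hypotheses, at a prime $t$-ideal $P$ of $R$ with $M\not\subseteq P$ one has $R_P=T_Q$ for a prime $t$-ideal $Q$ of $T$ (by the same correspondence), so $R_P$ is Noetherian by SM of $T$; at $P\supseteq M$, the field hypothesis forces $P=M$, and the reverse direction of the classical pullback Noetherianity lemma gives $R_M$ Noetherian from $T_M$ Noetherian plus $[K:k]$ finite. Finite $t$-character of $R$ follows because the maximal $t$-ideals of $R$ decompose into $\{M\}$ and those avoiding $M$ (in bijection with the corresponding maximal $t$-ideals of $T$), so a given nonzero nonunit of $R$ sits in at most one maximal $t$-ideal containing $M$ and in only finitely many among those avoiding $M$ by finite $t$-character of $T$.

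The principal technical obstacle is the precise pullback correspondence for prime $t$-ideals, since the $v$- and $t$-operations on $R$ and $T$ are not transparently compatible through $\varphi^{-1}$; the case split ``contains $M$'' versus ``avoids $M$'' is the essential content of Lemma \ref{LemPB2} and must be re-used carefully here. The second nontrivial ingredient is the classical module-finite theorem for Noetherian pullbacks that yields both $T_M$ Noetherian and $[K:k]$ finite from $R_M$ Noetherian; this is precisely where the field hypothesis on $D$ is indispensable and is what sharpens the Mori characterization of Lemma \ref{LemPB2} into the SM characterization.
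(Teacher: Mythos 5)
The paper offers no argument for this lemma: it is imported verbatim from \cite[Theorem 3.11]{M03}, so there is no internal proof to compare yours against. Judged on its own terms, your reconstruction follows the standard route (essentially Mimouni's): reduce to Lemma \ref{LemPB2} to obtain that $D$ is a field, $T$ is $t$-locally Mori and $T_M$ is Mori; observe that $M=(R:T)$ is divisorial and, once $D$ is a field, maximal, hence a maximal $t$-ideal of $R$, so that $R_M$ is Noetherian; apply the Noetherian pullback criterion to $(\square_M)$ to extract both that $T_M$ is Noetherian and that $[K:k]<\infty$; and use the correspondence $R_P=T_Q$ for primes away from the conductor to handle all remaining localizations. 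Each of these steps is sound; in particular the module-finiteness argument you gesture at (for instance via $M/M^2$ being simultaneously a finite-dimensional $k$-space and a $K$-space) does deliver $[K:k]<\infty$ from the Noetherianity of $R_M$.

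The one place where the sketch is genuinely thinner than it needs to be is the finite $t$-character clause, in both directions. What the counting argument requires is that contraction $Q\mapsto Q\cap R$ induce a \emph{bijection between the maximal $t$-ideals of $T$ not containing $M$ and the maximal $t$-ideals of $R$ distinct from $M$}; your phrasing (the relevant $Q$ ``biject onto maximal $t$-ideals of $R$ containing $b$'') overstates the image and, more importantly, does not record maximality on the $R$-side. Without it the argument does not close: a priori infinitely many maximal $t$-ideals of $T$ could contract to prime $t$-ideals of $R$ crowded inside the finitely many maximal $t$-ideals of $R$ containing $b$. This bijection is in fact valid for conductor-square pullbacks of type $(\square)$ --- it is established in \cite{FG96} (see also \cite{GH00}) and underlies the proof in \cite{M03} --- so the defect is a missing invocation of a known theorem rather than a wrong idea; but it must be cited as such, not treated as an automatic consequence of the prime-ideal correspondence of Lemma \ref{LemPB1}(4).
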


Now, we are ready to study the transfer of the locally SM notion to pullbacks.

\begin{theorem}\label{Pullback SM}
For the pullback diagram of type $(\square)$,
$R$ is a locally SM domain if and only if
$D$ is a field,
$T$ is a locally SM domain,
$T_M$ is a Noetherian domain
and $[K:k]$ is finite.
\end{theorem}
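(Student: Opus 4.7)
The plan is to combine Lemmas~\ref{LemPB2} and~\ref{LemPB3} with the prime-ideal dictionary of Lemma~\ref{LemPB1}, reducing the global SM question to a single localization, namely $R_M$. The key structural observation driving everything is that the localized pullback $(\square_M)$ is itself of type $(\square)$ as soon as $D$ is a field, so Lemma~\ref{LemPB3} can be applied to it.

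For the forward direction, assume $R$ is a locally SM domain. Since every SM domain is Mori (ACC on $w$-ideals implies ACC on $v$-ideals) and every prime $t$-ideal is a prime ideal, $R$ is $t$-locally Mori, so Lemma~\ref{LemPB2} yields three facts at once: $D$ is a field, $T$ is $t$-locally Mori, and $T_M$ is a Mori domain. Because $D$ is a field, $M$ is a maximal ideal of $R$, so $R_M$ is an SM domain by hypothesis. The diagram $(\square_M)$ now presents $R_M$ as a pullback of type $(\square)$ (with $k=D$ a field on top and $MT_M$ maximal in $T_M$). Applying Lemma~\ref{LemPB3} to $(\square_M)$ gives that $T_M$ is SM, $(T_M)_{MT_M}=T_M$ is Noetherian, and $[K:k]$ is finite. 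It remains to promote ``$T_M$ is SM'' to ``$T$ is locally SM''. Let $Q$ be a prime of $T$: if $Q=M$ we have just handled it; otherwise $Q\not\supseteq M$ (since $M$ is maximal in $T$), and Lemma~\ref{LemPB1}(4) gives $R_{Q\cap R}=T_Q$, which is SM because $R$ is locally SM.

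For the converse, assume the four right-hand conditions, and let $P$ be a prime of $R$. Since $D$ is a field, Lemma~\ref{LemPB1}(3) shows that the only prime of $R$ containing $M$ is $M$ itself. If $P=M$, then Lemma~\ref{LemPB3} applied to $(\square_M)$ (whose hypotheses---$k$ a field, $T_M$ SM, $T_M$ Noetherian, $[K:k]$ finite---all hold) gives that $R_M$ is SM. If $P\not\supseteq M$, then by Lemma~\ref{LemPB1}(4), $P=Q\cap R$ for a unique prime $Q$ of $T$ not containing $M$, and $R_P=T_Q$ is SM since $T$ is locally SM.

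No genuine obstacle arises: once $D$ is known to be a field, the prime spectrum of $R$ splits cleanly into $\{M\}$ and primes contracted from primes of $T$ not containing $M$, and the ``hard'' quantitative content ($T_M$ Noetherian and $[K:k]$ finite) is imported straight from Lemma~\ref{LemPB3} applied locally to $(\square_M)$ rather than reproved. The main subtlety worth flagging is simply the legitimacy of this local application of Lemma~\ref{LemPB3}, which rests on verifying $k\subsetneq K$; but this is automatic once $D$ is a field, since then $k=D\subsetneq K$.
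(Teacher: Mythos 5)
Your proof is correct and follows essentially the same route as the paper's: reduce via Lemma~\ref{LemPB2} to get that $D$ is a field, apply Lemma~\ref{LemPB3} to the localized diagram $(\square_M)$ to extract the Noetherianity of $T_M$ and the finiteness of $[K:k]$, and split the remaining primes according to whether they contain $M$ using Lemma~\ref{LemPB1}(4). The only cosmetic difference is that you check all primes directly, whereas the paper checks only maximal ideals and invokes the quotient-stability of the SM property via Proposition~\ref{ffalt}(3).
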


\begin{proof}
Suppose that $R$ is a locally SM domain.
Then $R$ is a $t$-locally Mori domain,
so $D$ is a field, and hence $M$ is a maximal ideal of $R$
by Lemma \ref{LemPB2}.
Hence the diagram ($\square_M$) may be considered as of type ($\square$).
By Lemma \ref{LemPB3},
$T_M$ is a Noetherian domain and $[K:k]$ is finite
since $R_M$ is an SM domain.
Now, we claim that $T$ is a locally SM domain.
If $T$ is a quasi-local domain,
then $T = T_M$ is a Noetherian domain,
and hence $T$ is a locally SM domain.
Now, assume that $T$ is not a quasi-local domain and
let $N$ be a maximal ideal of $T$ distinct from $M$.
Then $P := N \cap R$ is a prime ideal of $R$
which does not contain $M$.
Hence $T_N = R_P$ is an SM domain.
Since the SM-property is stable under the quotient extension,
$T$ is a locally SM domain by Proposition \ref{ffalt}(3).
For the converse,
suppose that the latter condition holds and
let $N$ be a maximal ideal of $R$.
If $N$ is distinct from $M$,
then there exists a maximal ideal $\mathfrak{m}$ of $T$ such that
$N = \mathfrak{m} \cap R$,
and hence $R_N = T_{\mathfrak{m}}$ is an SM domain.
On the other hand, if $N = M$,
then the diagram ($\square_M$) may be considered as of type ($\square$).
This implies that $R_N$ is an SM domain by Lemma \ref{LemPB3}.
Since the SM-property is stable under the quotient extension,
$R$ is a locally SM domain by Proposition \ref{ffalt}(3).
\end{proof}

It is worth pointing out that the argument above yields more than just the proof of the previous theorem;
indeed, we have the following remark.

\begin{remark}
{\rm
Consider the pullback diagram of type $(\square)$.
In the proof of Theorem \ref{Pullback SM},
we showed that if $R$ is a locally SM domain,
then $T$ is a Noetherian domain, $D$ is a field and $[K:k]$ is finite
when $T$ is a quasi-local domain.
Hence $R$ is a Noetherian domain \cite[Theorem 4.6]{GH00}.
This implies that the notions of a locally SM domain and a Noetherian domain
(and hence an SM domain) coincide when $T$ is a quasi-local domain.
Thus if we want to construct a locally SM domain which is not an SM domain
issued from pullbacks,
it is necessary to start with the condition `$T$ is not a quasi-local domain'.
}
\end{remark}

Also, since a Krull domain is exactly both an SM domain and a P$v$MD,
we obtain the following remark.

\begin{remark}
{\rm
Consider the pullback diagram of type $(\square)$.
Suppose that $R$ is either a Krull domain or locally Krull domain.
Then $R$ is both a locally SM domain and a locally P$v$MD.
This implies that $k = K$ by Theorem \ref{Pullback SM} and \cite[Corollary 3.3]{KN97}.
This contradicts the fact that $D$ is a proper subring of $K$.
Thus $R$ can never be a Krull domain nor a locally Krull domain.
}
\end{remark}

The fact that a (locally) Krull domain can never occur as
a pullback naturally leads to the question of whether
a pullback can be one of some generalizations of Krull domains.
Thus we proceed to investigate when a pullback can be one of some generalizations of Krull domains.
For our goal, we first establish the following lemma.

\begin{lemma}\label{Krull-like domain}
For the pullback diagram of type $(\square)$,
the following assertions hold.
\begin{itemize}
\item[(1)]
$R$ is a weakly Krull domain if and only if
$D$ is a field,
$T$ is a weakly Krull domain
and ${\rm ht}(M) = 1$.
\item[(2)]
$R$ is an infra-Krull domain if and only if
$D$ is a field,
$T$ is an infra-Krull domain,
${\rm ht}(M) = 1$
and $[K:k]$ is finite.
\item[(3)]
$R$ is a Krull-type domain if and only if
$D$ is a semi-quasi-local B\'ezout domain,
$T$ is a Krull-type domain,
$T_M$ is a valuation domain
and $k = K$.
\end{itemize}
\end{lemma}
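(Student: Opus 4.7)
The strategy for all three parts is to use the prime-ideal correspondence of Lemma \ref{LemPB1}: every prime of $R$ is either the pullback $\varphi^{-1}(Q)$ of a prime of $D$ (in which case it contains $M$), or the contraction $\mathfrak{q}\cap R$ of a prime of $T$ not containing $M$ (in which case $R_{\mathfrak{q}\cap R}=T_{\mathfrak{q}}$). Added to this is the classical fact that $M$ is itself a prime $t$-ideal of $R$, and more generally that $\varphi^{-1}(Q)$ is a (maximal) $t$-ideal of $R$ whenever $Q$ is a (maximal) $t$-ideal of $D$. Each equivalence in the lemma will be verified by translating the defining conditions (i), (ii), (iii), (v) for the corresponding family of primes back and forth across this correspondence.

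For \textbf{(1)}, I first argue by contradiction: if $D$ were not a field, it would admit a nonzero maximal $t$-ideal $Q$, so $\varphi^{-1}(Q)$ would be a prime $t$-ideal of $R$ strictly above $M$, contradicting one-$t$-dimensionality. Hence $D$ is a field, $M$ is maximal in $R$, and one-$t$-dimensionality forces $\mathrm{ht}(M)=1$. The bijection $X^1(R)\setminus\{M\}\leftrightarrow X^1(T)\setminus\{M\}$ together with the matching of localizations then transfers both the intersection representation $T=\bigcap_{Q\in X^1(T)}T_Q$ and the finite $t$-character from $R$ to $T$; the converse runs symmetrically, reassembling $R=R_M\cap\bigcap_{Q\in X^1(T),\,Q\neq M}T_Q$ from the pullback square. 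Part \textbf{(2)} then combines \textbf{(1)} with Lemma \ref{LemPB3} applied to the diagram $(\square_M)$: one-$t$-dimensionality gives $D$ a field, $\mathrm{ht}(M)=1$ and $T$ infra-Krull via \textbf{(1)}, while the SM property at $M$ (equivalently, Noetherianity of $R_M$) forces $T_M$ Noetherian and $[K:k]$ finite; at maximal $t$-ideals $N$ of $R$ not containing $M$, the identification $R_N=T_{\mathfrak{q}}$ transfers the SM property between the two rings.

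Part \textbf{(3)} is where the principal obstacle lies. A Krull-type domain is a P$v$MD with finite $t$-character, so the forward direction begins with the Fontana--Gabelli pullback theorem for P$v$MDs, which forces $k=K$ (so $T$ is a flat overring of $R$), $D$ and $T$ both P$v$MDs, and $T_M$ a valuation domain. The added finite $t$-character of $R$ then forces $D$ to have only finitely many maximal $t$-ideals, because a nonzero non-unit $d\in D$ lifts to a nonzero non-unit of $R$ belonging to every pullback $\varphi^{-1}(\mathfrak{n})$ with $\mathfrak{n}\in t\mhyphen\mathrm{Max}(D)$ containing $d$; since a semi-quasi-local P$v$MD coincides with a semi-quasi-local B\'ezout domain, $D$ is semi-quasi-local B\'ezout. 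Conversely, the hypotheses on $D$, $T$ and $M$ guarantee that $R$ is a P$v$MD via the same Fontana--Gabelli criterion, and $t\mhyphen\mathrm{Max}(R)$ decomposes as the finite set $\{\varphi^{-1}(\mathfrak{n}) : \mathfrak{n}\in\mathrm{Max}(D)\}$ together with the contractions of $t$-maximal ideals of $T$ not containing $M$; the former is finite, and the latter inherits finite $t$-character from the Krull-type hypothesis on $T$. The delicate step throughout will be this bookkeeping of $t$-maximal ideals, and in particular the verification that the inherited valuation structure on $R$ at each relevant prime matches the prescribed family for the Krull-type axioms.
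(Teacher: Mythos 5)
Your overall strategy is sound, but it diverges from the paper in an important practical respect: the paper does not prove parts (1) and (3) at all --- it cites them directly from Chang \cite[Theorem 1]{C04} and Kabbaj--Mimouni \cite[Proposition 4.1]{KM07} --- and only part (2) is argued, by combining part (1) with Lemma \ref{LemPB3} and the height-one correspondence $P=Q\cap R$, $R_P=T_Q$ from \cite[Corollary 2]{C04}. Your treatment of (2) is essentially the paper's (one wording caveat: what must be transferred at height-one primes away from $M$ is Noetherianity of the localization, not the ``SM property'' of the localization --- a quasi-local SM domain need not be Noetherian, but $R_P$ for $P\in X^1(R)$ of an infra-Krull domain is Noetherian by definition, so the transfer you intend does go through). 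Your direct arguments for (1) and (3) are reconstructions of the cited proofs; for (1) the sketch is correct modulo the standard pullback fact that $\varphi^{-1}(Q)$ is a prime $t$-ideal of $R$ whenever $Q$ is one of $D$, and the reassembly $R=T\cap R_M$ when $D$ is a field, both of which can be supplied.

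The genuine gap is in the forward direction of (3), in the step producing ``$D$ is semi-quasi-local.'' From the fact that a nonzero nonunit $d\in D$ lifts to a nonzero nonunit $x\in R$ lying in $\varphi^{-1}(\mathfrak{n})$ for every $\mathfrak{n}\in t\mhyphen{\rm Max}(D)$ containing $d$, finite $t$-character of $R$ only yields that each nonzero nonunit of $D$ lies in finitely many maximal $t$-ideals, i.e.\ that $D$ has finite $t$-character --- not that $t\mhyphen{\rm Max}(D)$ is finite, and a fortiori not that ${\rm Max}(D)$ is finite. To get finiteness of $t\mhyphen{\rm Max}(D)$ you should instead apply finite $t$-character of $R$ to a fixed nonzero element $m\in M$, which lies in $\varphi^{-1}(\mathfrak{n})$ for \emph{every} $\mathfrak{n}\in t\mhyphen{\rm Max}(D)$ simultaneously; combined with your (correct) observation that distinct $\mathfrak{n}$ yield incomparable prime $t$-ideals of $R$ which, since $R$ is a P$v$MD, cannot lie under a common maximal $t$-ideal, this bounds $|t\mhyphen{\rm Max}(D)|$. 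Even then one more step is needed: a P$v$MD with finitely many maximal $t$-ideals is a finite intersection $\bigcap_{\mathfrak{n}}D_{\mathfrak{n}}$ of valuation overrings, hence a semi-quasi-local Pr\"ufer domain, hence B\'ezout; the justification you give (``a semi-quasi-local P$v$MD is B\'ezout'') presupposes semi-quasi-locality, which is exactly what is being proved. A second, smaller omission in the same direction: you never say why $T$ acquires finite $t$-character (only that it is a P$v$MD); since $k=K$ makes $T$ a ($t$-)flat overring of $R$ by Lemma \ref{LemPB1}(5), this follows from the stability of the Krull-type property under such overrings, but it should be stated.
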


\begin{proof}
The assertions (1) and (3) appear in \cite[Theorem 1]{C04} and \cite[Proposition 4.1]{KM07}, respectively.

(2) Suppose that $R$ is an infra-Krull domain.
Then $R$ is both a strong Mori domain and a weakly Krull domain.
By the assertion (1) and Lemma \ref{LemPB3},
$D$ is a field, ${\rm ht}(M) = 1$ and $[K:k]$ is finite.
Now, we claim that $T$ is an infra-Krull domain.
It is sufficient to show that $T_Q$ is a Noetherian domain for any $Q \in X^1(T)$
since $T$ is a weakly Krull domain.
As $D$ is a field,
the pullback diagram $(\square_M)$ may be considered as of type $(\square)$.
Now, note that $R_M$ is a Noetherian domain,
so $T_M$ is a Noetherian domain by Lemma \ref{LemPB3}.
Let $Q \in X^1(T)$ with $Q \neq M$ and $P = Q \cap R$.
Then $P \in X^1(R)$ and
$T_Q = R_P$ \cite[Corollary 2]{C04}.
Hence $T_Q = R_P$ is a Noetherian domain.
Therefore $T$ is an infra-Krull domain.
For the converse,
suppose that the latter condition holds.
Then $R$ is a weakly Krull domain by the assertion (1).
Let $P \in X^1(R)$.
Then there exists $Q \in X^1(T)$ such that $P = Q \cap R$ \cite[Corollary 2]{C04}.
Since $T$ is an infra-Krull domain,
$T_Q$ is a Noetherian domain.
If $Q \neq M$, then $R_P = T_Q$ is a Noetherian domain,
where the equality follows from \cite[Corollary 2]{C04}.
On the other hand, let $Q = M$.
Then $P = M$.
By localizing at $M$,
we can obtain the pullback diagram $(\square_M)$ of type $(\square)$.
Hence $R_P = R_M$ is a Noetherian domain \cite[Theorem 4.6]{GH00}.
Thus $R$ is an infra-Krull domain.
\end{proof}

The next result represents a partial step toward our goal of
determining when a pullback can be a Krull-like domain.

\begin{theorem}\label{locally Krull-like domain}
For the pullback diagram of type $(\square)$,
the following assertions hold.
\begin{itemize}
\item[(1)]
$R$ is a locally weakly Krull domain if and only if
$D$ is a field,
$T$ is a locally weakly Krull domain
and ${\rm ht}(M) = 1$.
\item[(2)]
$R$ is a locally infra-Krull domain if and only if
$D$ is a field,
$T$ is a locally infra-Krull domain,
${\rm ht}(M) = 1$
and $[K:k]$ is finite.
\item[(3)]
$R$ is a locally Krull-type domain if and only if
$D$ is a locally B\'ezout domain,
$T$ is a locally Krull-type domain,
$T_M$ is a valuation domain
and $k = K$.
\end{itemize}
\end{theorem}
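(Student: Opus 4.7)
The plan is to mirror the argument used in Theorem \ref{Pullback SM}, with the relevant parts of Lemma \ref{Krull-like domain} playing the role of Lemma \ref{LemPB3}. The central observation is that for any prime $\mathfrak{n}$ of $D$, setting $N = \varphi^{-1}(\mathfrak{n})$, the localization $R_N$ sits in a pullback diagram of type $(\square)$ whose base is $D_\mathfrak{n}$ and whose bottom is a localization of $T$ in which $M$ remains maximal with residue field $K$. This is because $M$ is maximal in $T$, hence stays maximal in any localization of $T$ in which $M$ survives as a prime, and because $\varphi$ carries $R \setminus N$ onto $D \setminus \mathfrak{n}$, which consists of units in the field $K$. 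Each direction then reduces to an application of Lemma \ref{Krull-like domain} to these localized squares.

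For the forward direction, I would first observe that $R_M$ is locally in the class by Corollary \ref{cffalt}(1) and hence, being quasi-local, is itself in the class. Applying the appropriate part of Lemma \ref{Krull-like domain} to the diagram $(\square_M)$ (which is of type $(\square)$ with base the field $k$) extracts the conditions on $T_M$, $\mathrm{ht}(M)$, and either $[K:k]$ or $k = K$ asserted in the theorem. That $T$ is locally in the class follows by a case analysis: for $Q \in \mathrm{Spec}(T)$ with $Q \neq M$, Lemma \ref{LemPB1}(4) identifies $T_Q = R_{Q \cap R}$, while $Q = M$ is handled by the preceding step. The condition on $D$ (a field in Parts (1)--(2), locally B\'ezout in Part (3)) is obtained by applying the same construction at an arbitrary prime $\mathfrak{n}$ of $D$: the resulting pullback at $N = \varphi^{-1}(\mathfrak{n})$ is of type $(\square)$ with base $D_\mathfrak{n}$, and Lemma \ref{Krull-like domain} forces $D_\mathfrak{n}$ to be either a field (a contradiction when $\mathfrak{n} \neq 0$ in Parts (1)--(2)) or semi-quasi-local B\'ezout in Part (3); in the latter case, quasi-locality of $D_\mathfrak{n}$ upgrades B\'ezout to valuation.

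For the reverse direction, I would verify that $R_P$ is in the class for each $P \in \mathrm{Spec}(R)$ by cases. If $M \not\subseteq P$, then Lemma \ref{LemPB1}(4) gives $R_P = T_Q$, which is in the class by the hypothesis on $T$. If $M \subseteq P$, write $P = \varphi^{-1}(\mathfrak{p})$: in Parts (1)--(2), $D$ being a field forces $P = M$, and Lemma \ref{Krull-like domain} applied to $(\square_M)$ closes the case. In Part (3), I would identify $R_P$ with the preimage of $D_\mathfrak{p}$ under the surjection $T_M \twoheadrightarrow K$, which is a type-$(\square)$ pullback whose validity rests on $k = K$ and the flatness statement in Lemma \ref{LemPB1}(5); the hypotheses of Lemma \ref{Krull-like domain}(3) are then met because $D_\mathfrak{p}$ is local B\'ezout (hence valuation, hence semi-quasi-local B\'ezout), $T_M$ is a valuation domain, and $k = K$.

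The main technical hurdle is justifying the pullback-of-localizations identification, namely that $R_N = \tilde\varphi^{-1}(D_\mathfrak{n})$ for the induced surjection $\tilde\varphi$ from an appropriate localization of $T$ onto $K$. In Part (3) this is straightforward because $k = K$ makes $T$ a flat overring of $R$ with $R_M = T_M$. In Parts (1)--(2) one must work with $S = R \setminus N$ and check that $MS^{-1}T$ remains maximal in $S^{-1}T$ (immediate from the maximality of $M$ in $T$), that $S^{-1}T/MS^{-1}T \cong K$ (since $\varphi(S) = D \setminus \mathfrak{n}$ is a set of units in $K$), and that $R_N$ coincides with the preimage of $D_\mathfrak{n}$ under this surjection.
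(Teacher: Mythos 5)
Your proposal follows essentially the same route as the paper's proof: localize the square at primes of $R$ lying over primes of $D$ to obtain new diagrams of type $(\square)$ with base $D_{\mathfrak{n}}$, apply the corresponding part of Lemma \ref{Krull-like domain} to those quasi-local squares, and dispose of primes not containing $M$ via $R_P=T_Q$ from Lemma \ref{LemPB1}(4). The one point that needs care is your opening move in the forward direction, where you apply Lemma \ref{Krull-like domain} to $(\square_M)$ to extract the conditions on $T_M$, ${\rm ht}(M)$, $[K:k]$, resp.\ $k=K$: the diagram $(\square_M)$ is of type $(\square)$ only when $k\subsetneq K$. In parts (1) and (2) this is harmless provided you first establish that $D$ is a field (so that $k=D\subsetneq K$), i.e., the two halves of your argument must be run in the opposite order. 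In part (3) the issue is more than cosmetic: $(\square_M)$ degenerates exactly when $k=K$, which is the very conclusion you want, and in that case $R_M=T_M$ being a Krull-type domain does not yield that $T_M$ is a valuation domain (a two-dimensional local Krull domain is Krull-type but not valuation). The paper avoids this by reading off $k=K$, the valuation condition on $T_M$, and the B\'ezout condition on $D_Q$ simultaneously from the localized diagram at a \emph{nonzero} prime $Q$ of $D$, which is always honestly of type $(\square)$; your own localization step at $\mathfrak{n}$ supplies exactly this, so the repair is simply to let that step carry the weight (the residual case where $D$ is a field, hence has no nonzero primes, is then handled by $(\square_M)$, which is non-degenerate there and yields the contradiction $k=K$). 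With that rearrangement your argument coincides with the paper's.
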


\begin{proof}
(1) Suppose that $R$ is a locally weakly Krull domain.
Let $\mathfrak{m}$ be a maximal ideal of $D$ and
let $\mathfrak{p} := \varphi^{-1}(\mathfrak{m})$.
Then $\mathfrak{p}$ is a maximal ideal of $R$ containing $M$.
Hence we can obtain the following pullback diagram of type $(\square)$.
\begin{center}
$
\begin{tikzcd}
R_{\mathfrak{p}} \ar[r] \ar[d, hook] & D_{\mathfrak{m}} \ar[d, hook] \\
T_M \ar[r] & K
\end{tikzcd}
$
\end{center}
Since $R_{\mathfrak{p}}$ is a weakly Krull domain,
$D_{\mathfrak{m}}$ is a field, ${\rm ht}(MT_M) = 1$ and $T_M$ is a weakly Krull domain,
so $D$ is a field, ${\rm ht}(M) = 1$
by Lemma \ref{Krull-like domain}(1).
Now, we claim that $T$ is a locally weakly Krull domain.
Let $Q$ be a maximal ideal of $T$ and
let $P := Q \cap R$.
If $Q \neq M$,
then $P$ is a prime ideal of $R$ which does not contain $M$.
Hence $T_Q = R_P$ is a weakly Krull domain.
Therefore $T$ is a locally weakly Krull domain
by Proposition \ref{ffalt}(3).
For the converse, suppose that the latter condition holds.
Let $P$ be a maximal ideal of $R$.
If $P$ does not contain $M$,
then there exists a maximal ideal $Q$ of $T$ such that
$P = Q\cap R$ and $R_P = T_Q$.
Hence $R_P$ is a weakly Krull domain.
On the other hand,
if $P = M$,
then we have the pullback diagram $(\square_M)$
of type $(\square)$.
Since $T_M$ is a weakly Krull domain,
${\rm ht}(MT_M) = {\rm ht}(M) = 1$ and
$k$ is a field,
$R_M$ is a weakly Krull domain by Lemma \ref{Krull-like domain}(1).
Thus $R$ is a locally weakly Krull domain.

(2) Suppose that $R$ is a locally infra-Krull domain.
Then $R$ is a locally weakly Krull domain,
so ${\rm ht}(M) = 1$ and $D$ is a field
by the assertion (1).
As $D$ is a field,
the pullback diagram $(\square_M)$ is of type $(\square)$.
Hence $T_M$ is an infra-Krull domain and
$[K:k]$ is finite by Lemma \ref{Krull-like domain}(2).
Let $\mathfrak{m}$ be a maximal ideal of $T$
with $\mathfrak{m} \neq M$ and
$P = \mathfrak{m} \cap R$.
Then $T_{\mathfrak{m}} = R_P$ is an infra-Krull domain.
Hence $T$ is a locally infra-Krull domain.
For the converse,
suppose that the latter condition holds.
Let $P$ be a maximal ideal of $R$.
If $P \neq M$,
then there exists a maximal ideal $Q$ of $T$ such that
$P = Q \cap R$ and $R_P = T_Q$.
Hence $R_P$ is an infra-Krull domain.
On the other hand,
if $P = M$,
then the pullback diagram $(\square_M)$ is of type $(\square)$ since $D$ is a field.
Since $T_M$ is an infra-Krull domain,
${\rm ht}(MT_M) = {\rm ht}(M) = 1$,
$k$ is a field and $[K:k]$ is finite,
$R_M$ is an infra-Krull domain.
Thus $R$ is a locally infra-Krull domain.

(3) Suppose that $R$ is a locally Krull-type domain.
Let $Q$ be a nonzero prime ideal of $D$ and
$P = \varphi^{-1}(Q)$.
Then $P$ is a prime ideal of $R$ containing $M$.
Hence we obtain the following pullback diagram of type $(\square)$.
\begin{center}
$
\begin{tikzcd}
R_P \ar[r] \ar[d, hook] & D_Q \ar[d, hook] \\
T_M \ar[r] & K
\end{tikzcd}
$
\end{center}
Since $R_P$ is a Krull-type domain, $k = K$,
$D_Q$ is a B\'ezout domain and $T_M$ is a valuation domain
by Lemma \ref{Krull-like domain}(3).
This implies that $T$ is a flat overring of $R$,
and hence $T$ is a locally Krull-type domain by Proposition \ref{ffalt}(1).
For the converse,
suppose that the latter condition holds.
Let $P$ be a prime ideal of $R$.
If $P = M$, then $R_P = R_M = T_M$ is a Krull-type domain
since $k = K$.
If $P$ contains $M$,
then there exists a prime ideal $Q$ such that $\varphi^{-1}(Q) = P$,
and hence we can obtain the following pullback diagram of $(\square)$.
\begin{center}
$
\begin{tikzcd}
R_P \ar[r] \ar[d, hook] & D_Q \ar[d, hook] \\
T_M \ar[r] & K
\end{tikzcd}
$
\end{center}
Since the quotient field of $D_Q$ is equal to $K$,
$T_M$ is both a valuation domain and a Krull-type domain, and $D_Q$ is a quasi-local B\'ezout domain,
$R_P$ is a Krull-type domain by Lemma \ref{Krull-like domain}(3).
On the other hand,
if $P$ does not contain $M$,
then there exists a prime ideal $Q$ of $T$ such that
$Q \cap R = P$ and $R_P = T_Q$.
Hence $R_P$ is a Krull-type domain.
Thus $R_P$ is a Krull-type domain for any $P \in {\rm Spec}(R)$.
Consequently, $R$ is a locally Krull-type domain.
\end{proof}

Recall that
generalized Krull domains (respectively, locally generalized Krull domains)
are Krull-type domains (respectively, locally Krull-type domains).
The next remark gives a nice tool to construct a Krull-type domain
(respectively, locally Krull-type domain)
which is not a generalized Krull domain (respectively, locally generalized Krull domain).

\begin{remark}
{\rm
Consider the pullback diagram of type $(\square)$.
Suppose that $R$ is either a generalized Krull domain or
a locally generalized Krull domain.
Then $R$ is a locally weakly Krull domain,
so $D$ is a field by Theorem \ref{locally Krull-like domain}.
In contrast,
since $R$ is a locally P$v$MD,
$D = K$ \cite[Corollary 3.3]{KN97}.
This contradicts the fact that
$D$ is a proper subring of $K$.
Thus $R$ can never be a generalized Krull domain nor a locally generalized Krull domain.
A concrete example illustrating this impossibility is as follows:
Let $D \subseteq E$ be an extension of integral domains,
we have $R:=D+XE[X]$ is a generalized Krull domain if and only if $D = E$ is a generalized Krull domain
(cf. \cite[Corollary 1.7(1)]{Lim 2012}).
This says that if $D+XE[X]$ is a generalized Krull domain,
then $D+XE[X]$ is never issued from thee pullback diagram of type $(\square)$.

On the other hand,
we can construct a Krull-type domain and a locally Krull-type domain
using Lemma \ref{Krull-like domain}(3) and Theorem \ref{locally Krull-like domain}(3).
} 
\end{remark}

In \cite{EB02}, El Baghdadi defined a new type of Krull-like domain,
which differs from the four previously known Krull-like domains
({\it i.e.}, weakly Krull domains, infra-Krull domains, generalized Krull domains, and Krull-type domains).
An integral domain $D$ is said to be 
a {\it generalized Krull domain} if
it is a strongly discrete P$v$MD 
({\it i.e.}, $D_M$ is a strongly discrete valuation domain for every maximal $t$-ideal $M$ of $D$)
and every principal ideal has only finitely many minimal prime ideals.
Equivalently, $D$ is a P$v$MD and
for each prime $t$-ideal $P$ of $D$,
there exists a finitely generated ideal $J$ of $D$ such that $P = \sqrt{J_t}$ and $P \neq (P^2)_t$.
For the sake of avoiding confusion, we will use the abbreviation a {\it GK-domain}
to refer to generalized Krull domains in the sense of El Baghdadi.
It is important to point out that the definitions of (classical)
generalized Krull domains (in the sense of Gilmer) and GK-domains do not imply each other.
Note that the class of GK-domains falls strictly between
Krull domains and P$v$MDs, and that GK-domains are stable under the quotient extension.

Recall that $R$ is a GK-domain if and only if
$D$ and $T$ are GK-domains, $K=k$ and $T_M$ is a valuation domain \cite[Theorem 4.3]{EB02}.

\begin{theorem}\label{GK domain}
For a pullback diagram of type $(\square)$,
$R$ is a locally GK-domain if and only if
$D$ and $T$ are locally GK-domains,
$T_M$ is a valuation domain
and $k=K$.
\end{theorem}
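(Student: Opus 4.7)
The plan is to mirror the argument of Theorem~\ref{locally Krull-like domain}(3), substituting El Baghdadi's non-local characterization \cite[Theorem 4.3]{EB02} for \cite[Proposition 4.1]{KM07}. Since GK-domains are stable under the quotient extension, Proposition~\ref{ffalt}(3a) reduces the locally GK-property to checking localizations at maximal ideals; Lemma~\ref{LemPB1} then controls how maximal ideals of $R$ correspond to maximal ideals of $T$ and $D$.

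For the forward direction, assume $R$ is a locally GK-domain. I would first show that $k = K$: since $M$ is a prime ideal of $R$ (as $R/M \cong D$ is a domain), $R_M$ inherits the GK-property; if $k \subsetneq K$, then the diagram $(\square_M)$ is of type $(\square)$, and applying \cite[Theorem 4.3]{EB02} to $R_M$ would force $k = K$, a contradiction. Once $k = K$ is established, Lemma~\ref{LemPB1}(5) gives that $T$ is a flat overring of $R$, so $T$ is locally GK by Proposition~\ref{ffalt}(1). To obtain the remaining two conditions, observe that $k = K$ combined with $D \subsetneq K$ forces $D$ not to be a field, so every $\mathfrak{m} \in {\rm Max}(D)$ is nonzero; setting $\mathfrak{p} = \varphi^{-1}(\mathfrak{m})$ and localizing the original pullback at $\mathfrak{p}$ produces a pullback of type $(\square)$ with upper-right ring $D_{\mathfrak{m}}$ (a proper subring of $K$ because $\mathfrak{m}D_{\mathfrak{m}} \neq 0$). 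Applying \cite[Theorem 4.3]{EB02} to the GK-domain $R_{\mathfrak{p}}$ then yields simultaneously that $D_{\mathfrak{m}}$ is a GK-domain and that $T_M$ is a valuation domain.

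For the converse, assume the four listed conditions. Since $k = K$ and $D \subsetneq K$, $D$ is not a field, so $M$ is not maximal in $R$. By Proposition~\ref{ffalt}(3a) it suffices to verify that $R_P$ is a GK-domain for each $P \in {\rm Max}(R)$. If $P \not\supseteq M$, Lemma~\ref{LemPB1}(4) gives $R_P = T_Q$ for some maximal ideal $Q$ of $T$, which is a GK-domain by hypothesis. Otherwise $P = \varphi^{-1}(\mathfrak{m})$ for some nonzero $\mathfrak{m} \in {\rm Max}(D)$, and the localized pullback at $P$ is of type $(\square)$ satisfying all hypotheses of \cite[Theorem 4.3]{EB02}: both $D_{\mathfrak{m}}$ and $T_M$ are GK, $T_M$ is a valuation domain, and the quotient-field condition $k = K$ is given. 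Hence $R_P$ is a GK-domain, and $R$ is a locally GK-domain.

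The main obstacle I foresee is the proof that $k = K$ in the forward direction — this is the creative step, where one must realize that the prime ideal $M$ of $R$ gives a contradiction via $R_M$ being a GK-domain unless $k = K$ already. Once this is in place, the rest of the proof is a direct translation of the Theorem~\ref{locally Krull-like domain}(3) template, with the degenerate subcases (where $D_{\mathfrak{m}} = K$) safely excluded by the non-field condition on $D$ that $k = K$ imposes.
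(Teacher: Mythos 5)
Your proposal is correct, and its overall skeleton (localize at maximal ideals via Proposition~\ref{ffalt}(3), use Lemma~\ref{LemPB1} to sort those into contractions from $T$ versus preimages $\varphi^{-1}(\mathfrak{m})$, and apply \cite[Theorem 4.3]{EB02} to the localized pullbacks) matches the paper's. The one genuine divergence is how you obtain $k=K$ and the valuation condition on $T_M$ in the forward direction: the paper gets both in one stroke from the locally P$v$MD characterization of pullbacks \cite[Corollary 3.3]{KN97}, whereas you extract them from the necessity half of \cite[Theorem 4.3]{EB02} applied to $R_M$ (for $k=K$) and to $R_{\varphi^{-1}(\mathfrak{m})}$ (for $T_M$ being a valuation domain and $D_{\mathfrak{m}}$ being GK). Your route is self-contained in the sense that it leans on a single external theorem, at the cost of the extra localization argument at $M$ that the paper avoids. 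A further point in your favor: in the converse, the paper only treats the cases $P=M$ and ``$P\neq M$, so $P=Q\cap R$ for some $Q\in{\rm Max}(T)$,'' which silently omits the maximal ideals $P=\varphi^{-1}(\mathfrak{m})\supsetneq M$ that must exist because $k=K$ forces $D$ not to be a field (compare the fuller case analysis in the proof of Theorem~\ref{locally Krull-like domain}(3)); your three-way split handles exactly this case by invoking the sufficiency half of \cite[Theorem 4.3]{EB02}, so your write-up is actually the more complete of the two.
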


\begin{proof}
Assume that $R$ is a locally GK-domain.
Then $R$ is a locally P$v$MD,
so $K=k$, $T_M$ is
a valuation domain \cite[Corollary 3.3]{KN97}.
This implies that $T$ is a flat overring of $R$,
and hence $T$ is a locally GK-domain by Proposition \ref{ffalt}(1).
Now, we claim that $D$ is a locally GK-domain.
Let $Q$ be a maximal ideal of $D$ and let $P:=\varphi^{-1}(Q)$.
Then $P$ is a prime ideal of $R$ containing $M$.
Hence we obtain the following pullback diagram of type $(\square)$.
\begin{center}
$
\begin{tikzcd}
R_P \ar[r] \ar[d, hook] & D_Q \ar[d, hook] \\
T_M \ar[r] & K
\end{tikzcd}
$
\end{center}
Thus $D_Q$ is a GK-domain by \cite[Theorem 4.3]{EB02}.
Consequently, $D$ is a locally GK-domain by Proposition \ref{ffalt}(3).
For the converse, suppose that the latter condition holds.
Let $P$ be a maximal ideal of $R$.
If $P=M$, then $R_P=T_M$ is a GK-domain.
Now, if $P \neq M$, then there is a maximal ideal $Q$ of $T$ such that
$Q\cap R=P$ and $R_P=T_Q$, and hence $R_P$ is a GK-domain.
Therefore $R$ is a locally GK-domain by Proposition \ref{ffalt}(3).
\end{proof}

By combining Lemma \ref{Krull-like domain},
Theorems \ref{locally Krull-like domain} and \ref{GK domain},
we obtain

\begin{corollary}\label{Krull in composite}
Let $K$ be a field and let $D$ be a $($proper$)$ subring of $K$ with the quotient field $k$.
Suppose that $R$ is of the form $D+XK[X]$ or $D+XK[\![X]\!]$.
Then the following assertions hold.
\begin{enumerate}
\item[(1)]
The following conditions are equivalent.
\begin{enumerate}
\item[(i)]
$R$ is a $($locally$)$ weakly Krull domain.
\item[(ii)]
$D$ is a field.
\end{enumerate}
\item[(2)]
The following conditions are equivalent.
\begin{enumerate}
\item[(i)]
$R$ is a $($locally$)$ Noetherian domain.
\item[(ii)]
$R$ is a $($locally$)$ SM domain.
\item[(iii)]
$R$ is a $($locally$)$ infra-Krull domain.
\item[(iv)]
$D$ is a field and $[K:k]$ is finite.
\end{enumerate}
\item[(3)] The following conditions are equivalent.
\begin{enumerate}
\item[(i)]
$R$ is a Krull-type domain.
\item[(ii)]
$D$ is a semi-quasi-local B\'ezout domain and $k=K$.
\end{enumerate}
\item[(4)] The following conditions are equivalent.
\begin{enumerate}
\item[(i)]
$R$ is a locally Krull-type domain.
\item[(ii)]
$D$ is a locally B\'ezout domain and $k=K$.
\end{enumerate}
\item[(5)] The following conditions are equivalent.
\begin{enumerate}
\item[(i)]
$R$ is a locally GK-domain.
\item[(ii)]
$D$ is a locally GK-domain and $k=K$.
\end{enumerate}
\end{enumerate}
\end{corollary}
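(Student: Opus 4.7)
The plan is to realize $R$ as a pullback of type $(\square)$ in a concrete way and then read off each equivalence from the general pullback theorems already established. Set $T := K[X]$ in the polynomial case and $T := K[\![X]\!]$ in the power-series case, and let $M := XT$. Then $T/M \cong K$, and the canonical map $\varphi \colon T \to K$ identifies $R = \varphi^{-1}(D)$ with $D + XK[X]$ or $D + XK[\![X]\!]$, respectively. In both cases $T$ is a PID (indeed a DVR in the power-series case), the ideal $M$ is maximal with $\mathrm{ht}(M) = 1$, and $T_M$ is a DVR. Consequently, $T$ automatically satisfies every structural hypothesis appearing in Lemma \ref{Krull-like domain}, Theorem \ref{Pullback SM}, Theorem \ref{locally Krull-like domain}, and Theorem \ref{GK domain}: $T$ is (locally) weakly Krull, (locally) infra-Krull, (locally) SM, (locally) Krull-type, and (locally) GK, while $T_M$ is simultaneously Noetherian, a valuation domain, and a GK-domain. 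In particular, the height-one condition on $M$ is built in.

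With this reduction, assertions (1), (3), (4), and (5) follow by direct substitution into the corresponding pullback criteria. For (1), Lemma \ref{Krull-like domain}(1) and Theorem \ref{locally Krull-like domain}(1) both collapse to the single requirement ``$D$ is a field''. For (3) and (4), Lemma \ref{Krull-like domain}(3) and Theorem \ref{locally Krull-like domain}(3) reduce exactly to ``$D$ is a semi-quasi-local (respectively, locally) B\'ezout domain and $k = K$''. For (5), Theorem \ref{GK domain} gives ``$D$ is a locally GK-domain and $k = K$''. None of these steps require any ingredient beyond the facts that $K[X]$ and $K[\![X]\!]$ are PIDs (or a DVR) and that $T_M$ is a DVR.

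For (2), the equivalences (ii) $\Leftrightarrow$ (iii) $\Leftrightarrow$ (iv) follow immediately from Lemma \ref{LemPB3}, Theorem \ref{Pullback SM}, Lemma \ref{Krull-like domain}(2), and Theorem \ref{locally Krull-like domain}(2), because the conditions on $T$, on $T_M$ being Noetherian, and on $\mathrm{ht}(M) = 1$ are automatic. To close the loop with the Noetherian assertion (i), the implication (i) $\Rightarrow$ (ii) is trivial (Noetherian domains are SM and locally Noetherian), and the reverse (iv) $\Rightarrow$ (i) follows from Eakin--Nagata: when $D$ is a field with $[K:D] < \infty$, a $D$-basis of $K$ exhibits $T$ as a finitely generated $R$-module, so the Noetherian property of $T$ descends to $R$ (and hence to every localization of $R$). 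The main obstacle is very mild; essentially only this last step in (2) draws on an ingredient outside the pullback machinery of the paper, while all other equivalences are mechanical substitutions into the general criteria.
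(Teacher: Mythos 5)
Your proposal is correct and follows essentially the same route as the paper: the corollary is obtained by specializing the pullback criteria (Lemma \ref{Krull-like domain}, Theorem \ref{Pullback SM}, Theorems \ref{locally Krull-like domain} and \ref{GK domain}) to $T=K[X]$ or $K[\![X]\!]$ with $M=XT$, where all hypotheses on $T$, $T_M$ and ${\rm ht}(M)$ are automatic. The only point where you supply your own ingredient is the Noetherian equivalence in (2), which you settle via Eakin--Nagata; the paper would instead invoke the already-cited facts that $R$ is (locally) Noetherian if and only if $T$ is (locally) Noetherian, $D$ is a field and $[K:k]<\infty$ (\cite[Theorem 4.6]{GH00} and \cite{KOT22}), and either justification is fine.
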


At the end of this section,
we examine constructed examples of integral domains possessing one property while failing to satisfy another.
By Corollary \ref{Krull in composite}, we obtain

\begin{example}
{\rm
(1) As $[\mathbb{R}:\mathbb{Q}]=\infty$,
$\mathbb{Q}+X\mathbb{R}[X]$ and $\mathbb{Q}+X\mathbb{R}[\![X]\!]$ are
(locally) weakly Krull domains which are neither
(locally) infra-Krull domains nor Noetherian domains
by Corollary \ref{Krull in composite}(1) and (2).

(2) As $\mathbb{Z}$ is a B\'ezout domain which has infinitely many maximal ideals,
$\mathbb{Z} + X \mathbb{Q}[X]$ and $\mathbb{Z} + X \mathbb{Q}[\![X]\!]$ are
locally Krull-type domains which are not Krull-type domains
by Corollary \ref{Krull in composite}(3) and (4).
}
\end{example}

\end{document}